\theoremstyle{plain}
\newtheorem{thm}{Theorem}[section]
\newtheorem{cor}[thm]{Corollary}
\newtheorem{lemma}[thm]{Lemma}
\theoremstyle{definition}
\newtheorem{remark}[thm]{Remark}
\numberwithin{equation}{section}
\def\Rn{{\mathbb R}^n}
\def\R{\mathbb R}
\def\R+{\mathbb R_+}
\def\M{\mathfrak M}
\def\O{\Omega_{0,1}}
\def\r{\rho}
\def\br{\overline{\rho}}
\def\gphq{\Gamma_{q,\phi_2}}
\def\rphd{\rho_{{}_{p,\phi}}^\prime}
\def\gphp{\Gamma_{p,\phi_1}}
\def\gph{\Gamma_{p,\phi}}
\def\rph{\rho_{{}_{p,\phi}}}
\def\rps{\rho_{{}_{p^\prime,\psi}}}
\def\lr{L_\rho}
\def\l{\lambda}
\def\i{\infty}
\def\ap{\approx}
\def\ls{\lesssim}
\def\ve{\varepsilon}
\def\bp{\overline{\phi}}
\begin{document}
\title[]{ The Rearrangement-Invariant space $\Gamma_{p,\phi}$ }
\author{Amiran Gogatishvili}
\address[Amiran Gogatishvili]{
Institute of Mathematics, Academy of Science of the Czech Republic, \v Zitn\'a 25, 11567 Prague 1, Czech Republic}
\email[]{gogatish@math.cas.cz}
\author{Ron Kerman}
\address[Ron Kerman]{Department of Mathematics, Brock University, 500 Glendridge Ave. St. Catharines, Ontario, Canada L2S 3A1}
\email[]{rkerman@brocku.ca}

\thanks{The research of the first author was partially supported by
the grant no. 201/08/0383 of the Grant Agency of the Czech Republic and  RVO: 67985840}
\thanks{The research of the  second  author was supported in part by NSERC grant A4021}
\date{}
\subjclass[2000]{Primary 46E30; Secondary 26D10} %
\keywords{Lorentz Gamma space, K\"othe dual space, weighted norm inequalities, Hardy operator, Stieltjes transform, Cald\'eron-Zygmund operator.}

\begin{abstract}Fix $b\in (0,\infty)$ and $p\in (1,\infty)$. Let $\phi$ be a positive measurable function on $I_b:=(0,b)$. Define the Lorentz Gamma norm, $\r_{p,\phi}$, at the measurable function $f:\R+\to\R+$ by $\rph(f):=\left[\int_0^bf^{**}(t)^p\phi(t)dt\right]^{\frac1p}$, in which $f^{**}(t):=t^{-1}\int_0^tf^{*}(s)ds$, where $f^*(t):=\mu_f^{-1}(t)$, with 
$\mu_f(s):=|\{ x\in I_b: |f(x)|>s\}|$. 

Our aim in this paper is to study the rearrangement-invariant space determined by $\rph$. In particular, we determine its K\"othe
dual and its Boyd indices. Using the latter a sufficient condition is given for a Cald\'eron-Zygmund operator to map such a space into itself.
\end{abstract}
\maketitle
\section{Introduction} \label{s1}

Let $(X,\mu)$ be a $\sigma$-finite measure space with $\mu(X)=b$ and denote by  $\M(X)$ the set of $\mu$-measurable real-valued functions on $X$.  
This paper is concerned with the properties of certain rearrangement invariant spaces of functions in $\M(X)$.
The norm of such a space is defined in terms of an index $p$, $1<p<\i$, and a positive locally integrable  (weight) function $\phi$ on $I_b:=(0,b)$ by 
\begin{equation} \label{0.1}
	\rph(f):=\left[\int_0^bf^{**}(t)^p\phi(t)dt\right]^{\frac1p}, \quad f\in \M(X).
\end{equation}
Here, 
$$f^{**}(t):=t^{-1}\int_0^bf^{*}(s)ds, \quad t\in I_b,
$$
in which the decreasing rearrangement, $f^*$, is the inverse (in a generalized sense) of the distribution function, 
$\mu_f$, of $f$, where 
$$\mu_f(\l):=\mu(\{ x\in X: |f(x)|>\l\}), \quad \l >0.
$$
We require 
$$ \int_1^\i \phi(t)t^{-p}dt<\i,  \,\, \text{if} \,\, b=\i, \,\, \text{and} \,\, \int_{I_b}\phi(t) t^{-p}dt=\i,\, \, \text{for all} \,\, b\in \R+;
$$
otherwise, the space 
$$
\gph=\gph(X):=\{f\in \M(X): \rph(f)<\i\}
$$
would, in the first case, consist only of the zero function and, in the second case, would be equal to the space $L_1(X)$ of $\mu$-integrable functions on $X$. Such weights $\phi$ will be called non-trivial.
 
 The {\it spaces} $\gph$  are examples of rearrangement-invariant (r.i) Banach function spaces, which are  defined by norms $\r$ whose characteristic property is that 
 $\r(f)=\r(g)$   whenever $ f,g\in \M(X)$ are equimeasurable in the sense that
 $f^*=g^*$.
 
 A key thing to know about a Banach function norm, $\r$, such as \eqref{0.1}, is its associate norm, $\r^\prime$, defined at $g\in \M(X)$ by 
 $$\r^{\prime}(g)=\sup_{\substack{f\in \M(X)\\ \r(f)\le1}}\int_X |fg|d\mu.
 $$
 We will show that, when $ \gph(X)\not \supset L_\infty(X)$, or, equivalently, 
 $\int_{I_b}\phi(s)\,ds =\infty$, one has 
 $$ \rphd(g)\ap \rps(g), \quad g\in \M(X),$$
 where  $p^\prime=\frac{p}{p-1}$ and  $\psi$ is  a certain (dual) weight. 

We motivate the choice of $\psi$, in an appendix to the paper. For now, we just state our main result, namely,
\vskip+1.0cm 
 {\bf  Theorem A} {\it  Let $(X,\mu )$   be a $\sigma $-finite measure space with $\mu (X)=b$.
 Fix $p$, $1<p<\infty$, and suppose $\phi$ is a non-trivial weight function on $I _{b}$. Then,
\begin{equation*}\label{1.1}
\rphd(g)\approx \rps(g)+\frac{\int_{X}\left| g\right|}{\left[ \int_{I_{b}}\phi \right]^{p}}
,\qquad 
g\in \M(X),
\end{equation*}
in which
 \begin{equation*}\label{1.2}
\psi (t):=\frac{t^{p+p^{\prime}-1}\int_{0}^{t}\phi \int _{t}^{b}\phi (s)s^{-p}ds}{\left[ \int _{0}^{t}\phi +t^{p}\int _{t}^{b}\phi (s)s^{-p}ds\right] ^{p^{\prime}+1}}, \qquad t\in I_{b}, \quad p'=\frac{p}{p-1}.
	\end{equation*}
}

 A proof of this theorem has been given 
 by the first   author and L. Pick in \cite{GP} using so-called discretization methods. Our aim here is to give a new proof using more familiar techniques. Alternative descriptions of the function space dual to $\gph$ can be found in \cite{GHS} and \cite{S1}.

The Boyd indices of  an r.i. norm  are essential to describing the action of such operators as those of Calder\'on-Zygmund on the space $\lr(\Rn)$. These indices are defined in terms of the norm, $h_\r(s)$, of the dilation operator. Their calculation when $\r=\rph$ and $\mu(X)=\i$ is  greatly simplified by the result in

\vskip+1.0cm 
 {\bf  Theorem B} {\it
Fix an index $p$, $1<p<\i$ and let $\phi$ be   a non-trivial  weight on $\R+$. Take $\r=\rph$ and at $s\in \R+$ set
$$ h_\r(s):=	\sup\frac{\r\left(f\left(\frac{t}{s}\right)\right)}{\r\left(f\right)}=
\sup\frac{\r\left(f^*\left(\frac{t}{s}\right)\right)}{\r\left(f^*\right)}, \quad 0\not =f\in \M_+(\R+).
$$
Then,
\[h_\r(s)\ap \sup_{t\in \R+}\left[\frac{\int_0^{st}\phi(y)dy +s^pt^p\int_{st}^{b} \phi(y)y^{-p}dy}{\int_0^{t}\phi(y)dy +t^p\int_{t}^{b} \phi(y)y^{-p}dy}\right]^{\frac1p}. 
\]
}
  
\section{rearrangement-invariant spaces} \label{s2}
Let $(X,\mu)$ be a $\sigma$-finite measure space with $\mu(X)=b$ and denote by  $\M(X)$ the set of $\mu$-measurable real-valued functions on $X$ and by $\M_+(X)$ the nonnegative functions in   $\M(X)$.
A Banach function norm is a functional $\r: \M_+(X)\to \R+$ satisfying 
\begin{itemize}
	\item[(A1)] $\r(f)=0$ if and only if $f=0$ $\mu$- a.e.,
	\item[(A2)]  $\r(cf)=c\r(f)$, $c\ge 0$,
		\item[(A3)]  $\r(f+g)\le \r(f)+\r(g)$,
			\item[(A4)] $0\le f_n\uparrow f$ implies $\r(f_n)\uparrow\r(f)$,
			\item[(A5)] $|E|<\i$ implies $\r(\chi_E)<\i$,
     \item[(A6)] $|E|<\i$ implies $\int_Efd\mu\le c_E(\r)\r(f)$, for some constant $c_E(\r)$ depending  on $E$ and $\r$ but not on $f\in \M_+(X)$.
\end{itemize}

Furthermore, as mentioned in the introduction, a Banach function norm is said to be rearrangement invariant if $\r(f)=\r(g)$ whenever $f,g\in \M_+(X)$ are equimeasurable in the sense that 
 $f^*=g^*$. The decreasing rearrangement, $f^*$, of $f\in \M(X)$  on $\R+$ is defined as  
$$f^*(t):=\inf\{\l>0: \mu(\{ x\in X: |f(x)|>\l\})\le t\},$$
$t\in I_b$. It  satisfies the property that 
$$ |\{ t\in I_b: f^*(t)>\tau\})|=\mu(\{ x\in X: |f(x)|>\tau \}), \, \, f\in \M(X),
\,\,  \tau\in \R+.$$

Now, although the mapping $f\mapsto f^*$ is not subadditive, the mapping \newline
$f\mapsto t^{-1}\int_0^tf^*(s)ds $ is, namely ,

\begin{equation} \label{2.1}
t^{-1}\int_0^t(f+g)^*(s)ds \le t^{-1}\int_0^tf^*(s)ds+t^{-1}\int_0^tg^*(s)ds,
\end{equation}
 for all $f,g\in \M(X)$, $ t\in I_b.$
The Kothe dual of a Banach function norm $\r$ is another such norm, $\r'$, with
\begin{equation} \label{2.11}
\r'(g):=\sup_{\r(f)\le 1}\int_X fg\mu, \quad  f,g\in \M_+(X).
\end{equation}
It is obeys the Principle of Duality; that is, 
$$\r'':=(\r')'=\r.$$

The space $\lr(X)$ is the vector space 
$$\{f\in \M(X): \r(|f|)<\i\},$$
together with the norm 
$$\|f\|_{\lr}:=\r(|f|).$$
This Banach space is said to be an r.i. space provided $\r$ is an r.i. function norm.
The norm, $\rph$, defined in \eqref{0.1} in terms of an index $p$, $1<p<\i$, and a positive locally integrable  (weight) function $\phi$ on $I_b$ is an r.i. norm;

If  $\r$ is an  r.i. function norm, then,
\begin{equation}\label{ff}
\r(\chi_{{}_{(0,t)}})=\frac{t}{\r^\prime(\chi_{{}_{(0,t)}})},\quad t\in I_b
\end{equation}

The  dilation operator, $E_s$,  $s\in \R+$, given at  $f\in \M(\R+)$, $t\in \R+$, by
$$ (E_sf)(t):=f(st),$$
is bounded on any r.i. space $\lr(\R+)$ and  the operator norm of $E_{1/s}$ on  $\lr(\R+)$ is denoted
by  $h_\r(s)$. The norm is determined on the non-negative decreasing functions in $L_\r(\R+)$.

We define the lower and upper Boyd indices of $\lr(\R+)$ as
$$ 
i_\r:=\sup_{0<t<1}\frac{\log h_\r(t)}{\log t} \quad \text{and}\quad
I_\r:=\inf_{1<t<\i}\frac{\log h_\r(t)}{\log t}
$$

The operator norm of  $E_{1/s}$ on characteristic  functions of the form $ \chi_{{}_{(0,a)}}$, $a\in \R+$, is denoted by  $M_\r(s)$; thus,
$$
M_\r(s)=\sup_{0<a<\i}\frac{\r(\chi_{{}_{(0,as)}})}{\r(\chi_{{}_{(0,a)}})}.
$$
The so-called fundamental indices of $\r$ are defined in terms of $M_\r$ as

$$ 
\underline{i}_\r:=\sup_{0<s<1}\frac{\log M_\r(s)}{\log s} \quad \text{and}\quad
\underline{I}_\r:=\inf_{1<s<\i}\frac{\log M_\r(s)}{\log s}.
$$

Clearly, 
$$0\le i_\r\le \underline{i}_\r\le \underline{I}_\r\le I_\r
\le 1.$$

\section{Weighted spaces} \label{s3} Fix $b>0$ and let $w\in \M_+(I_b)$, $w>0$ a.e.. Given $p$, $1<p<\i$, the weighted Lebesgue space, $L_p(w)$, is defined by the norm
$$ \left[\int_0^b|f(t)|^pw(t)dt\right]^{\frac1p}, \quad f\in \M(I_b).$$
 One readily shows that the Banach dual of  $L_p(w)$ is the space $L_{p^\prime}(w^{1-p^\prime})$, $p^\prime=\frac{p}{p-1}$, namely, the weighted Lebesgue space with norm 
  $$ \left[\int_0^b|g(t)|^{p^\prime}w(t)^{1-p^\prime}dt\right]^{\frac1{p^\prime}}, \quad g\in \M(I_b).$$
 
 In this section we consider the action of certain positive integral operators  on such spaces. This action  is expressed on terms of so-called weighted norm inequalities. The most basic ones involve the Hardy averaging operator and its dual, that is, 
 
 \[(Pf)(t):=t^{-1}\int_0^t f(s)ds\quad \text{and} \quad  (Qf)(t):=\int_t^b f(s)\frac{ds}{s}, \quad f\in \M_+(I_b),\quad t\in I_b. 
\]
 \begin{thm}[\cite{M}]\label{t3.1}
 Fix $b>0$  and let $u$ and $v$ be weights on $I_b$. Then, for $1<p\le q<\i$ one has the least constant $C>0$ in the inequality
 \begin{equation} \label{h3.1}
\left(\int_0^b (u(t)(Pf)(t))^q\,dt\right)^{\frac1q}\le C\left(\int_0^b(v(t) f(t))^p\,dt\right)^{\frac1p}, \quad f\in \M_+(I_b),
\end{equation}
  equivalent to
  \[\sup_{0<r<b} \left(\int_r^b \left(\frac{u(t)}{t}\right)^q\,dt\right)^{\frac1q}\left(\int_0^r v(t)^{-p^\prime}\,dt\right)^{\frac1{p^\prime}},
  \]
  and the least constant $C>0$ in the inequality
  \begin{equation} \label{q3.2}
\left(\int_0^b (u(t)(Qf)(t))^q\,dt\right)^{\frac1q}\le C\left(\int_0^b(v(t) f(t))^p\,dt\right)^{\frac1p}, \quad f\in \M_+(I_b),
\end{equation}
  equivalent to
  \[\sup_{0<r<b} \left(\int_0^r u(t)^q\,dt\right)^{\frac1q}\left(\int_r^b (tv(t))^{-p^\prime}\,dt\right)^{\frac1{p^\prime}}.
 \]
  \end{thm}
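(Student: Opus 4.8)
I would prove the inequality \eqref{h3.1} for $P$ directly and then deduce \eqref{q3.2} for $Q$ by duality. Write $F(t):=\int_0^t f(s)\,ds$, so that $u(t)(Pf)(t)=(u(t)/t)F(t)$, and set $W(r):=\int_0^r v(s)^{-p'}\,ds$, $U(r):=\int_r^b(u(t)/t)^q\,dt$, so that the quantity in the statement is $A:=\sup_{0<r<b}U(r)^{1/q}W(r)^{1/p'}$. The lower estimate $A\le C$ I would obtain by testing \eqref{h3.1} on $f_r:=v^{-p'}\chi_{(0,r)}$: one computes $\int_0^b(vf_r)^p=W(r)$, while $(Pf_r)(t)=t^{-1}W(r)$ for $t>r$ forces the left side to be at least $W(r)\,U(r)^{1/q}$; rearranging and taking the supremum over $r$ gives $A\le C$ (if $W(r)=\infty$, test on truncations of $f_r$ and pass to the limit).

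For the reverse bound $C\lesssim A$ I would assume $A<\infty$, $0<\Vert vf\Vert_p<\infty$, and — after a routine truncation — that $W$ is finite, continuous and strictly increasing on $(0,b)$, hence a bijection of $(0,b)$ onto $(0,W(b))$. Then I would pick points $x_k$ ($k$ ranging over the relevant integers) with $W(x_k)=2^k$, so that the intervals $(x_{k-1},x_k]$ partition $(0,b)$ and $W(x_k)-W(x_{k-1})=2^{k-1}$. For $t\in(x_{k-1},x_k]$ one has $F(t)\le\int_0^{x_k}f=\sum_{j\le k}\int_{x_{j-1}}^{x_j}f$, and Hölder's inequality on each block gives $\int_{x_{j-1}}^{x_j}f\le c_j^{1/p}\,2^{(j-1)/p'}$ with $c_j:=\int_{x_{j-1}}^{x_j}(vf)^p$, whence $\sum_j c_j=\Vert vf\Vert_p^p$. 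Using also $\int_{x_{k-1}}^{x_k}(u(t)/t)^q\,dt\le U(x_{k-1})\le A^q\,2^{-(k-1)q/p'}$ (the last step by the definition of $A$), summing over $k$ would give
\begin{equation*}
\int_0^b\bigl(u(t)(Pf)(t)\bigr)^q\,dt\ \le\ A^q\sum_k\Bigl(\sum_{j\le k}c_j^{1/p}\,2^{(j-k)/p'}\Bigr)^q .
\end{equation*}
The inner sum is the discrete convolution of $(c_j^{1/p})_j$ with the kernel $(2^{-m/p'})_{m\ge0}\in\ell^1$, so Young's inequality together with the embedding $\ell^p\hookrightarrow\ell^q$ — this is the one place where $p\le q$ is used — bounds the right side by $\bigl((1-2^{-1/p'})^{-1}A\bigr)^q\bigl(\sum_j c_j\bigr)^{q/p}=\bigl((1-2^{-1/p'})^{-1}A\bigr)^q\Vert vf\Vert_p^q$, whence $C\le(1-2^{-1/p'})^{-1}A$, completing the equivalence for \eqref{h3.1}.

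To get \eqref{q3.2} I would invoke duality: the Lebesgue-adjoint of $f\mapsto u\cdot Qf$ is $g\mapsto P(ug)$ (since $\int_0^b u(Qf)g=\int_0^b f\,P(ug)$ by Fubini), so the least constant in \eqref{q3.2} equals the least constant in $\Vert v^{-1}P\tilde f\Vert_{p'}\le C\Vert u^{-1}\tilde f\Vert_{q'}$. As $q'\le p'$, this is an inequality of the form \eqref{h3.1} with weights $v^{-1},u^{-1}$ and indices $q',p'$, whose associated condition — by the part just proved — is exactly $\sup_{0<r<b}\bigl(\int_0^r u^q\bigr)^{1/q}\bigl(\int_r^b(tv)^{-p'}\bigr)^{1/p'}$, as stated. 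The hard part is the sufficiency half of \eqref{h3.1}: the art is to arrange the level-set decomposition so that the $A$-condition, the blockwise Hölder estimates, and the normalization $W(x_k)=2^k$ together collapse the left side into a genuine discrete convolution, after which matters reduce to Young's inequality and $\ell^p\hookrightarrow\ell^q$. The remaining work — the low-regularity bookkeeping ($W$ infinite or not strictly monotone, the case $b<\infty$) and the formal duality reduction of \eqref{q3.2} — is routine.
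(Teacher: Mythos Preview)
Your argument is correct, but note that the paper does not supply a proof of this theorem at all: it is stated with a citation to Maz'ya~\cite{M} and used as a black box throughout. What you have written is one of the standard proofs of the Muckenhoupt--Talenti--Tomaselli characterisation of the weighted Hardy inequality: necessity by testing on $v^{-p'}\chi_{(0,r)}$, sufficiency via a dyadic level-set decomposition of $W$ followed by Young's convolution inequality and the embedding $\ell^p\hookrightarrow\ell^q$, and the $Q$-case by duality. There is thus nothing in the paper to compare against; your write-up simply fills in a result the authors chose to quote.
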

 An operator essentialy built from $P$ and $Q$ when $b=\i$ is the Stieltjes operator
  $$(Sf)(t):=\int_0^\i\frac{f(s)}{s+t}ds,\quad f\in \M_+(I_b).$$
    Clearly, for $f\in \M_+(\R+), \quad t\in \R+,$
  \begin{equation}\label{3.31}\frac12[(Pf)(t)+(Qf)(t)]\le (Sf)(t)\le [(Pf)(t)+(Qf)(t)].
  \end{equation}
  
The following results are given in Andersen~\cite{A} for $1<p\le q<\i$ and in Sinnamon~\cite{S} for $1<q<p<\i$.
  \begin{thm}\label{t3.2}
 Let $u$ and $v$ be weights on $\R+$. Then,  in the inequality 
 \begin{equation} \label{s3.1}
\left(\int_0^\i (Sf)(t)^qu(t)\,dt\right)^{\frac1q}\le K\left(\int_0^\i f(t)^pv(t)\,dt\right)^{\frac1p}, \quad f\in \M_+(\R+),
\end{equation}
the least possible  $K>0$ is  
  equivalent to
   \begin{equation} \label{a3.1}\sup_{t>0} \left(\int_0^\i \left(\frac{t}{s+t}\right)^qu(s)\,ds\right)^{\frac1q}\left(\int_0^\i \frac{v(t)^{1-p^\prime}}{(s+t)^{p^\prime}}\,ds\right)^{\frac1{p^\prime}},
  \end{equation}
  when $1<p\le q<\i$,
  and to
  \[\left[\int_0^\i\left[\left[\int_0^\i \left(\frac{t}{s+t}\right)^qu(s)\,ds\right]^{\frac1p}\left[\int_0^\i \frac{v(t)^{1-p^\prime}}{(s+t)^{p^\prime}}\,ds\right]^{\frac1{p^\prime}}\right]^{\frac{pq}{p-q}}u(t)\,dt\right]^{\frac1q-\frac1{p}},
 \]
  when $1<q<p<\i$.
  \end{thm}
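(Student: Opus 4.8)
The plan is to reduce \eqref{s3.1} to the classical weighted Hardy inequalities by way of the pointwise comparison \eqref{3.31}. Since $Sf\ap Pf+Qf$ for $f\in\M_+(\R+)$ and $Pf,Qf\ge0$, one has $\|Sf\|_{L^{q}(u)}\ap\|Pf\|_{L^{q}(u)}+\|Qf\|_{L^{q}(u)}$ (using $\|g_{1}+g_{2}\|_{L^{q}(u)}\le\|g_{1}\|_{L^{q}(u)}+\|g_{2}\|_{L^{q}(u)}\le2\max_{i}\|g_{i}\|_{L^{q}(u)}$ for $g_{i}\ge0$), so the least constant $K$ in \eqref{s3.1} satisfies $K\ap K_{P}+K_{Q}$, where $K_{P}$, $K_{Q}$ are the least constants in the weighted Hardy inequality $\|Pf\|_{L^{q}(u)}\le K_{P}\|f\|_{L^{p}(v)}$ and in its dual $\|Qf\|_{L^{q}(u)}\le K_{Q}\|f\|_{L^{p}(v)}$ (weights taken as densities). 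Everything then comes down to evaluating $K_{P}$, $K_{Q}$ and matching $K_{P}+K_{Q}$ with the quantity in \eqref{a3.1}.

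For $1<p\le q<\i$ I would invoke Theorem~\ref{t3.1} with $b=\i$, after rewriting the density weights $u,v$ of \eqref{s3.1} as the multiplier weights $u^{1/q},v^{1/p}$ of \eqref{h3.1}--\eqref{q3.2} (note $v^{1/p}$ enters to the power $-p'$, and $-p'/p=1-p'$); this gives
\[
K_{P}\ap\sup_{r>0}\Bigl(\int_{r}^{\i}u(t)t^{-q}\,dt\Bigr)^{1/q}\Bigl(\int_{0}^{r}v(t)^{1-p'}\,dt\Bigr)^{1/p'},\qquad
K_{Q}\ap\sup_{r>0}\Bigl(\int_{0}^{r}u(t)\,dt\Bigr)^{1/q}\Bigl(\int_{r}^{\i}t^{-p'}v(t)^{1-p'}\,dt\Bigr)^{1/p'}.
\]
Write $\mathcal A:=\sup_{t>0}A(t)B(t)$ for the quantity in \eqref{a3.1}, with $A(t)^{q}=\int_{0}^{\i}\bigl(\tfrac{t}{s+t}\bigr)^{q}u(s)\,ds$ and $B(t)^{p'}=\int_{0}^{\i}(s+t)^{-p'}v(s)^{1-p'}\,ds$. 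Splitting each integral at $s=t$ and using $\tfrac{1}{s+t}\ap\tfrac1t$ for $s\le t$, $\tfrac{1}{s+t}\ap\tfrac1s$ for $s\ge t$, one gets $A(t)^{q}\ap\int_{0}^{t}u+t^{q}\int_{t}^{\i}u(s)s^{-q}\,ds$ and $B(t)^{p'}\ap t^{-p'}\int_{0}^{t}v^{1-p'}+\int_{t}^{\i}v(s)^{1-p'}s^{-p'}\,ds$; keeping only the second summand of $A(t)^{q}$ and the first of $B(t)^{p'}$ gives $A(t)B(t)\gs\bigl(\int_{t}^{\i}u(s)s^{-q}\,ds\bigr)^{1/q}\bigl(\int_{0}^{t}v^{1-p'}\bigr)^{1/p'}$, and keeping instead the first summand of $A(t)^{q}$ and the second of $B(t)^{p'}$ gives $A(t)B(t)\gs\bigl(\int_{0}^{t}u\bigr)^{1/q}\bigl(\int_{t}^{\i}s^{-p'}v(s)^{1-p'}\,ds\bigr)^{1/p'}$; taking $\sup_{t}$ shows $\mathcal A\gs K_{P}$ and $\mathcal A\gs K_{Q}$, so that $K\ap K_{P}+K_{Q}\ls\mathcal A$. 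For the opposite inequality I would test \eqref{s3.1} on $f(s)=v(s)^{1-p'}(s+t)^{1-p'}$: a short computation gives $\|f\|_{L^{p}(v)}=B(t)^{p'-1}$, and, since $\tfrac1{s+\tau}\ge\tfrac{t}{(s+t)(t+\tau)}$, one finds $Sf(\tau)\ge\tfrac{t}{t+\tau}B(t)^{p'}$, whence $\|Sf\|_{L^{q}(u)}\ge A(t)B(t)^{p'}$ and therefore $A(t)B(t)=A(t)B(t)^{p'}/B(t)^{p'-1}\le K$ for every $t>0$; thus $\mathcal A\le K$. This yields $K\ap\mathcal A$ in the range $p\le q$.

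For $1<q<p<\i$ the same scheme applies with Theorem~\ref{t3.1} replaced by the known characterization of the weighted Hardy inequality in this range (Maz'ya--Rozin, Stepanov, and Sinnamon~\cite{S}): $K_{P}$ and $K_{Q}$ become $L^{r}$-integrals, $\tfrac1r=\tfrac1q-\tfrac1p$, of the corresponding balance functions, and combining them produces the $\bigl[\int_{0}^{\i}(\,\cdot\,)^{pq/(p-q)}u\bigr]^{1/q-1/p}$-type expression stated --- but now the necessity half needs a superposition of the test functions above rather than a single one, and the upper bound $K_{P}+K_{Q}\ls(\text{stated expression})$ requires a Minkowski-type interchange of the two $L^{r}$-integrations. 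I expect this last point --- showing the sum of the two $L^{r}$-Hardy expressions collapses to the single displayed one --- to be the main obstacle; the case $p\le q$ is, as above, an essentially self-contained consequence of \eqref{3.31}, the elementary kernel estimates, and Theorem~\ref{t3.1}.
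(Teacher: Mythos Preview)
The paper does not prove Theorem~\ref{t3.2} at all: it is stated with the attribution ``The following results are given in Andersen~\cite{A} for $1<p\le q<\infty$ and in Sinnamon~\cite{S} for $1<q<p<\infty$'' and then simply used. So there is no in-paper proof to compare against.

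That said, your argument for the range $1<p\le q<\infty$ is correct and is essentially Andersen's original proof: the reduction $K\approx K_P+K_Q$ via \eqref{3.31}, the estimate $\mathcal A\gtrsim K_P+K_Q$ by splitting the kernel integrals at $s=t$, and the test function $f_t(s)=v(s)^{1-p'}(s+t)^{1-p'}$ together with the elementary inequality $(s+t)(t+\tau)\ge t(s+\tau)$ to obtain $\mathcal A\le K$. All of this is sound; the only cosmetic point is that the displayed condition \eqref{a3.1} in the paper has a typographical slip (the dummy variable in the second integral should be $s$, not $t$), which you have silently and correctly fixed.

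For $1<q<p<\infty$ you give only a sketch and flag the genuine difficulty: combining the two Maz'ya--Sinnamon integral conditions for $K_P$ and $K_Q$ into the single displayed expression is not a one-line computation, and the necessity argument does require more than a single test function. Your identification of the obstacle is accurate; this is precisely what Sinnamon~\cite{S} works out. Since the paper itself defers entirely to \cite{S} here, your level of detail already exceeds what the paper provides, but as a self-contained proof the $q<p$ case remains incomplete.
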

 
\section{Proof of Theorem A.}\label{s4}
The following lemma is a key element in the proof of the Theorem A. In it and  in the rest of the  section, it will simplify  things if we write $\psi$  in the form
\begin{equation}\label{3.1}
	\psi =\frac{(P\phi)(Q_{p}\phi)}{\left[(PQ_{p})(\phi)\right] ^{p^\prime+1}},
	\end{equation}
where 
\[(P\phi)(t)=t^{-1}\int_0^t \phi(s)ds\quad \text{and} \quad  (Q_{p}\phi)(t)=pt^{p-1}\int_t^b \phi(s)s^{-p}ds. 
\]

\begin{lemma}\label{l3.1}
Fix $p$ and $b$ with, $1<p<\infty$ and $0<b\leq \infty$. Suppose $\phi$ is a non-trivial weight on $I_{b}$ and let $\psi$ be given by \eqref{3.1} . Then, there exists $C>0$, independent of  $ f,g\in \M_+(I_{b})$, such that 

\[{\rm (i)}\quad \int _{I_{b}}fg\left[ \frac{P\phi }{(PQ_{p})(\phi)}\right] ^{\frac{1}{p^\prime}+1} \leq C\left( \int _{I_{b}}f^{p}\phi \right) ^{\frac{1}{p}}\left[ \left( \int _{I_{b}}(Pg)^{p^\prime}\psi  \right)^{\frac{1}{p^\prime}} +\frac{\int _{I_{b}}g}{\left[ \int _{I_{b}}\phi \right] ^{\frac{1}{p}}}\right], 
\]
if $f\downarrow$,
and
\[{\rm (ii)} \quad
\int _{I_{b}}fg\left[ \frac{Q_{p}\phi }{(PQ_{p})(\phi)}\right] ^{\frac{1}{p^\prime}+1}\leq C\left( \int _{I_{b}}f(t)^{p}\phi (t)t^{-p}dt\right) ^{\frac{1}{p}}
	 \left( \int _{I_{b}}\left( \int _{t}^{b}g\right) ^{p^\prime}\psi (t)dt \right) ^{\frac{1}{p^\prime}},
	 \]
if $f\uparrow$.
 \end{lemma}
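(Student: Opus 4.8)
The plan is to prove (i) and (ii) separately, in each case by recognizing the left-hand side as a weighted Hardy-type pairing and applying Theorem~\ref{t3.1} (together with the duality $L_p(w)^* = L_{p'}(w^{1-p'})$) to verify the required weighted norm inequality; the exponents in the definition \eqref{3.1} of $\psi$ are rigged precisely so that the Muckenhoupt-type balance conditions come out as identities up to constants.

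For part (i), I would first use that $f$ is decreasing, so $f(t) \ap (Pf)(t)$ only at the level of one-sided estimates; more usefully, I would dualize. Write the left side as $\int_{I_b} g \cdot h$ with $h(t) := f(t)\left[\tfrac{P\phi}{(PQ_p)(\phi)}\right]^{1/p'+1}$, and estimate it by $\int_{I_b} (Pg)\cdot(-t h'(t))$-type manipulation, or more directly: since $f\downarrow$ we may write $f(t) = \int_t^b (-df(s))$ wait --- instead, I would transpose the Hardy operator. The cleaner route: by Fubini, $\int_{I_b} f g\, \omega = \int_{I_b} (Pg)(t)\, k(t)\,dt$ for a suitable kernel when $\omega$ is expressed through $P$; but the sharpest tool here is that for $f\downarrow$ one has $\int_{I_b} f g \,\omega \ls \int_{I_b} f \cdot (\text{averaged } g\omega)$. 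Concretely I would show
\[
\int_{I_b} f g\, \omega \ls \left(\int_{I_b} f^p \phi\right)^{1/p}\left[\left(\int_{I_b}(Pg)^{p'}\psi\right)^{1/p'} + \frac{\int_{I_b}g}{\left[\int_{I_b}\phi\right]^{1/p}}\right]
\]
by splitting $g$ and using that the adjoint of the Hardy operator $P$ with respect to the pairing against a decreasing function is controlled by $Q$-type quantities; then the weighted inequality to check is exactly of the form \eqref{h3.1} (or \eqref{q3.2}) with $u,v$ read off from $\phi$, $\psi$, and $(PQ_p)(\phi)$, and the Muckenhoupt functional reduces, after substituting \eqref{3.1}, to $\sup_r (\text{something})^{1/p}(\text{something})^{1/p'} \ap 1$, with the extra additive term $\int g/[\int\phi]^{1/p}$ absorbing the boundary contribution at $t=b$ that arises when $\int_{I_b}\phi<\infty$ (i.e.\ when $L_\infty \subset \Gamma_{p,\phi}$).

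Part (ii) is the ``upper'' analogue: here $f\uparrow$, the relevant Hardy operator is $Q$ (or rather $g\mapsto \int_t^b g$), and the weight on the $f$-side is $\phi(t)t^{-p}$ rather than $\phi(t)$. I would again dualize, writing the left side as a pairing of $\int_t^b g$ against $f$ times a weight built from $Q_p\phi / (PQ_p)(\phi)$, and apply the second half of Theorem~\ref{t3.1} (inequality \eqref{q3.2}), again with $1<p\le q=p<\infty$, so that the criterion is the single Muckenhoupt supremum $\sup_{0<r<b}\left(\int_0^r u^p\right)^{1/p}\left(\int_r^b (tv)^{-p'}\right)^{1/p'}$. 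Substituting the definition of $\psi$ and the formulas $(P\phi)(t)=t^{-1}\int_0^t\phi$, $(Q_p\phi)(t)=pt^{p-1}\int_t^b\phi(s)s^{-p}ds$, $(PQ_p)(\phi)(t)=\int_0^t\phi + t^p\int_t^b\phi(s)s^{-p}ds$, this supremum should again collapse to a bounded quantity after elementary algebra, with no boundary term needed since $f\uparrow$ forces the mass near $0$.

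The main obstacle I anticipate is not the application of Theorem~\ref{t3.1} per se but the bookkeeping that reduces the Muckenhoupt functionals to $\ap 1$: one must integrate $\psi$ against $r^{\pm}$ over $(0,r)$ or $(r,b)$ and recognize the antiderivative of $(P\phi)(Q_p\phi)/[(PQ_p)(\phi)]^{p'+1}$, using that $\frac{d}{dt}(PQ_p)(\phi)(t) = \frac{d}{dt}\left[\int_0^t\phi + t^p\int_t^b\phi(s)s^{-p}ds\right]$ has a clean form, namely it is $p\,t^{p-1}\int_t^b\phi(s)s^{-p}ds = \frac{1}{t}(Q_p\phi)(t)$ (the $\phi(t)$ terms cancel). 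That identity is the linchpin: it shows $\psi(t)$ is, up to the power $p'+1$ and the factor $(P\phi)(t)$, essentially $\frac{d}{dt}[(PQ_p)(\phi)(t)]^{-p'}$ times a monomial, which makes both $\int_0^r$ and $\int_r^b$ of the relevant integrands explicitly computable and comparable to powers of $(PQ_p)(\phi)(r)$, whence the products telescope. I would present that differentiation identity as a preliminary computation, then feed it into each of the two Muckenhoupt checks, and finally assemble (i) and (ii) from the corresponding cases of Theorem~\ref{t3.1}.
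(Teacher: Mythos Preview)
Your endgame is right and matches the paper: the Muckenhoupt check in Theorem~\ref{t3.1} collapses because of the identity
\[
\frac{d}{dt}\Big[t\,(PQ_p)(\phi)(t)\Big]=\frac{d}{dt}\left[\int_0^t\phi+t^p\int_t^b\phi(s)s^{-p}\,ds\right]=(Q_p\phi)(t),
\]
(note the factor $t$: you wrote the identity for $(PQ_p)(\phi)$ itself, which is off by a product-rule term). The paper uses exactly this to evaluate the relevant integral in closed form and bound the Muckenhoupt supremum by a constant.

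Where your sketch is thin is the chain of manipulations in (i) that \emph{precedes} the Hardy inequality. A single application of Theorem~\ref{t3.1} does not suffice, because the left side is genuinely bilinear in $f$ and $g$ with $f\downarrow$ entering nontrivially. The paper's route is specific: first write $\big[\int_0^t\phi\big]^{1/p'+1}=(1+\tfrac{1}{p'})\int_0^t\big[\int_0^s\phi\big]^{1/p'}\phi(s)\,ds$ and use $f\downarrow$ to pull $f$ inside this integral; then Fubini swaps the order; then an integration by parts in the $s$-variable replaces $g$ by $\int_0^s g$ and produces the boundary term $\int_{I_b}g\big/[\int_{I_b}\phi]^{1/p}$ (so this term is not merely ``absorbed'' --- it is where it comes from); then H\"older with respect to $\phi(t)\,dt$ separates $f$ from $g$; and only \emph{then} is one left with a genuine $Q$-type Hardy inequality for the function $H(s)=\int_0^s g\cdot[s(PQ_p)(\phi)(s)]^{-1/p'-2}(Q_p\phi)(s)$, whose Muckenhoupt condition is verified via the identity above. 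Your proposal circles these steps (``dualize'', ``Fubini'', ``$-th'(t)$-type manipulation'') without committing to them, and they are the bulk of the work.

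For (ii) the paper does something quite different from your direct plan: it performs the change of variable $t\mapsto t^{-1}$ (or $t\mapsto(b-t)^{-1}$ when $b<\infty$), sets $\widetilde f(y)=f(y^{-1})$, $\widetilde g(y)=g(y^{-1})$, $\widetilde\phi(y)=\phi(y^{-1})y^{p-2}$, and checks that this transforms the $Q_p\phi$-weighted integral with $f\uparrow$ into the $P\widetilde\phi$-weighted integral with $\widetilde f\downarrow$, i.e.\ exactly the situation of (i). One then verifies that under this substitution $\widetilde\psi(t)=\psi(t^{-1})t^{p'-2}$, $\int\widetilde f^p\widetilde\phi=\int f(t)^p\phi(t)t^{-p}dt$, and $\int(P\widetilde g)^{p'}\widetilde\psi=\int(\int_t^b g)^{p'}\psi(t)\,dt$, and the extra $\int\widetilde g/[\int\widetilde\phi]^{1/p}$ term vanishes because $\int\widetilde\phi=\int\phi(t)t^{-p}dt=\infty$ for non-trivial $\phi$. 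This is considerably slicker than redoing the (i) argument with the roles of $P$ and $Q_p$ interchanged, and it explains why no additive boundary term appears in (ii).
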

 \begin{proof}
 \,\, 
  
 {\rm (i)} We have 
 \begin{align*}
\frac{p^\prime}{p^\prime+1}&\int _{I_{b}}fg\left[ \frac{P\phi }{\left(PQ_{p}\right)(\phi)}\right]  ^{\frac{1}{p^\prime}+1}\\
&=\int_{I_{b}}g(t)\int_{0}^{t}f(s)\left( \int_{0}^{s}\phi \right)^{\frac{1}{p^\prime}}\phi (s)ds\left[ t\left(PQ_{p}\right)(\phi) (t)\right] ^{-\frac{1}{p^\prime}-1}dt , \quad \text{since} \quad f\downarrow,\\
&=\int_{I_{b}}f(t)\left( \int_{0}^{t}\phi \right) ^{\frac{1}{p^\prime}}\int_{t}^{b}g(s)\left[ s\left( PQ_{p}\right)(\phi) (s)\right] ^{-\frac{1}{p^\prime}-1}ds\phi (t)dt,\quad \text{by Fubini's theorem},\\
&=\int_{I_{b}}f(t)\left( \int_{0}^{t}\phi \right) ^{\frac{1}{p^\prime}}
\left[\left.\int_{0}^{s}g\left[ s\left(PQ_{p}\right)(\phi) (s)\right]^{-\frac{1}{p^\prime}-1}\right|_t^b\right.\\
&\hskip+1.5cm+\left.\left(\frac{1}{p^\prime}+1\right) \int_{t}^{b}\int_{0}^{s}g
\left[  s\left(PQ_{p}\right)(\phi) (s)\right]^{-\frac{1}{p^\prime}-2}
 \left( Q_{p}\phi \right)(s)ds\right] \phi (t)dt \\
&\leq \int_{I_{b}}f(t)\left( \int_{0}^{t}\phi \right)^{\frac{1}{p^\prime}}\left[ \int_{I_{b}}g\left[ \int_{I_{b}}\left(Q_{p}\phi\right) (t)dt \right] ^{-\frac{1}{p^\prime}-1}\right.\\
&\hskip+0.5cm \left.+\left( \frac{1}{p^\prime}+1\right) \int_{t}^{b}\int_{0}^{s}g\left[ s\left( PQ_{p} \right)(\phi)  (s)\right] ^{-\frac{1}{p^\prime}-2}\left( Q_{p}\phi \right) (s)ds\right] \phi (t)dt\\ 
&=\int_{I_{b}}f(t)\left( \int_{0}^{t}\phi \right)^{\frac{1}{p^\prime}}\left[ p^{\frac{1}{p^\prime}+1}\int_{I_{b}}g\left[ \int_{I_{b}}\phi \right] ^{-\frac{1}{p^\prime}-1}\right.\\
&\hskip+0.5cm \left.+\left( \frac{1}{p^\prime}+1\right)  \int_{t}^{b}\int_{0}^{s}g\left[ s\left(PQ_{p}\right)(\phi)  (s)\right]^{-\frac{1}{p^\prime}-2}\left( Q_{p}\phi \right) (s)ds\right] \phi (t)dt\\ 
&\leq (p+1)^{2}\left[ \int_{I_{b}}f^{p}\phi \right] ^{\frac{1}{p}}
\left[ \int_{I_{b}}\int_{0}^{t}\phi  \left[ \int_{I_{b}}g\left[ \int_{I_{b}}\phi \right] ^{-\frac{1}{p^\prime}-1}\right.\right.\\
&\hskip+0.5cm +\left.\left.\int_{t}^{b}\int_{0}^{s}g\left[s\left(PQ_{p}\right)(\phi)  (s)\right] ^{-\frac{1}{p^\prime}-2}\left( Q_{p}\phi \right) (s)ds\right] ^{p^\prime}\phi (t)dt\right] ^{\frac{1}{p^\prime}}\\
&\leq (p+1)^{2}\left[ \int_{I_{b}}f^{p}\phi \right] ^{\frac{1}{p}}
   \left[\int_{I_{b}}g\left[\int_{I_{b}}\phi \right]^{-\frac{1}{p^\prime}-1}\left[ \int_{I_{b}}\phi (t)\int_{0}^{t}\phi dt\right] ^{\frac{1}{p^\prime}}\right.\\
 &\hskip+0.5cm +\left.\left[ \int_{I_{b}}\left( \int_{t}^{b}\int_{0}^{s}g\left[ s\left(PQ_{p}\right)(\phi) (s)\right] ^{-\frac{1}{p^\prime}-2}\left( Q_{p}\phi\right) (s)ds\right) ^{p^\prime}\phi (t)\int_{0}^{t}\phi dt\right] ^{\frac{1}{p^\prime}}\right]\\
&\leq (p+1)^{2}\left[ \int_{I_{b}}f^{p}\phi \right] ^{\frac{1}{p}}
 \left[  \int_{I_{b}}g\left[\int_{I_{b}}\phi \right]^{-\frac{1}{p}}\right.\\
 &\hskip+0.5cm +\left.\left[ \int_{I_{b}}\left( \int_{t}^{b}\int_{0}^{s}g\left[ s\left(PQ_{p}\right)(\phi) (s)\right] ^{-\frac{1}{p^\prime}-2}( Q_{p}\phi) (s)ds\right) ^{p^\prime}\phi (t)\int_{0}^{t}\phi dt\right] ^{\frac{1}{p^\prime}}\right] ,
 \end{align*}
 in which the third inequality was obtained using H\"older's inequality with respect to the measure $\phi (t)dt$. 
 
 The proof of (i) will be complete if we can show, that  
 \[ 
	\int_{I_{b}}\left( \int_{t}^{b}\int_{0}^{s}g\left[ s\left( PQ_{p} \right)(\phi )(s)\right] ^{-\frac{1}{p^\prime}-2}\left(Q_p\phi \right)(s)ds\right) ^{p^\prime}\phi (t)\int_{0}^{t}\phi dt
	\] 
is dominated by a constant multiple of $\int_{I_{b}}(Pg)(t)^{p^\prime}\psi (t)dt$.
To this end, let
\[
	H(t):=\int_{0}^{s}g\left[ s\left( PQ_{p}\right)(\phi )(s)\right] ^{-\frac{1}{p^\prime}-2}\left( Q_{p}\phi \right)(s), \qquad s\in I_{b}
\]
so that the assertion reads
\begin{align*}
	\int_{I_{b}}\left( \int_{t}^{b}H(s)ds\right) ^{p^\prime}&\phi (t)\int_{0}^{t}\phi dt\\
	&\leq C\int_{I_{b}}H(t)^{p^\prime}\left[ t\left( PQ_{p}\right)(\phi) (t)\right] ^{p^\prime}\left(\left( Q_{p}\phi \right) (t)\right)^{-p^\prime-1}\int_{0}^{t}\phi (t)dt
\end{align*}
But, this holds by Theorem~\ref{t3.1}, 
 inasmuch as
\begin{align*}
&\left( \int_{0}^{t}\phi (s)\int_{0}^{s}\phi ds\right) ^{\frac{1}{p^\prime}}\left( \int_{t}^{b}\left[ s\left( PQ_{p}\right) (\phi)(s)\right] ^{-p}\left( Q_{p}\phi \right) (s)\left( \int_{0}^{s}\phi \right)^{1-p}ds\right) ^{\frac{1}{p}} \\
&=2^{-\frac{1}{p^\prime}}\left( \int_{0}^{t}\phi \right) ^{\frac{2}{p^\prime}}\left( \int_{t}^{b}\left[ s\left( PQ_{p}\right)(\phi) (s)\right] ^{-p}\left( Q_{p}\phi \right) (s)\left( \int_{0}^{s}\phi \right)^{1-p}ds\right) ^{\frac{1}{p}}\\
&\le 2^{-\frac{1}{p^\prime}}\left( \int_{0}^{t}\phi \right) ^{\frac{2}{p^\prime}}\left( \int_{0}^{t}\phi \right) ^{-\frac{1}{p^\prime}}\left( \int_{t}^{b}\left[ s\left( PQ_{p}\right)(\phi)(s)\right] ^{-p}d\left[ s\left( PQ_{p}\right)(\phi) (s)\right]\right) ^{\frac{1}{p}}\\
&=-\frac{2^{-\frac{1}{p^\prime}}}{p-1}\left( \int_{0}^{t}\phi \right) ^{\frac{1}{p^\prime}}\left(\left.\left[ s\left( PQ_{p}\right)(\phi)(s)\right] ^{-p+1}\quad\right|_{t}^{b}\right)^{\frac{1}{p}}\\
&\le \frac{2^{-\frac{1}{p^\prime}}}{p-1}\left[ \frac{\int_{0}^{t}\phi }{t\left( PQ_{p}\right)(\phi)(t)}\right] ^{\frac{1}{p^\prime}}\\
&=\frac{2^{-\frac{1}{p^\prime}}}{p-1}\left[ \frac{\int_{0}^{t}\phi }{\int_{0}^{t}\phi +t\left( Q_{p}\phi \right) (t)}\right] ^{\frac{1}{p^\prime}}\\
&\leq \frac{2^{-\frac{1}{p^\prime}}}{p-1}.
\end{align*}
{\rm (ii)}
To begin, suppose $b=\i$. Making the change of variable $t\rightarrow t^{-1}$ three times in a row and setting $\widetilde{f}(y)=f(y^{-1})$, $\widetilde{g}(y)=g(y^{-1})$, $\widetilde{\phi}(y)=\phi (y^{-1})y^{p-2}$, we obtain
\begin{align*}
\int_{0}^{\i }fg\left[ \frac{Q_{p}\phi }{\left( PQ_{p}\right) (\phi) }\right] ^{\frac{1}{p^\prime}+1}&=\int_{0}^{\i }\widetilde{f}\widetilde{g}\left[ \frac{\int_{t^{-1}}^{\i }\phi (s)s^{-p}ds}{t^{p}\int_{0}^{t^{-1}} Q_{p}\phi }\right] ^{\frac{1}{p^\prime}+1}dt,\\
&=\int_{0}^{\i }\widetilde{f}\widetilde{g}\left[ \frac{\int_{0}^{t}\widetilde{\phi} }{t^{p}\int_{t}^{\i }s^{-p-1}\int_{0}^{s}\widetilde{\phi}ds }\right] ^{\frac{1}{p^\prime}+1}dt \\
&=\int_{0}^{\i }\widetilde{f}\widetilde{g}\left[ \frac{\int_{0}^{t}\widetilde{\phi} }{t\left( Q_{p}P\right)(\widetilde{\phi}) (t) }\right] ^{\frac{1}{p^\prime}+1}dt\\
&=\int_{0}^{\i }\widetilde{f}\widetilde{g}\left[ \frac{P\widetilde{\phi} }{\left(PQ_{p}\right)(\widetilde{\phi})}\right] ^{\frac{1}{p^\prime}+1}.
\end{align*}
Thus, from (i), there follows, since $\widetilde{f}\downarrow$,
\[
\int_{0}^{\i }fg\left[ \frac{Q_{p}\phi }{\left( PQ_{p}\right)( \phi )}\right] ^{\frac{1}{p^\prime}+1}\leq C\left[ \int_{0}^{\i }\widetilde{f}^{p}\widetilde{\phi }\right] ^{\frac{1}{p}} \left[ \left( \int_{0}^{\i }(P\widetilde{g})^{p^\prime}\widetilde{\psi }\right) ^{\frac{1}{p^\prime}} +\frac{\int_{0}^{\i }\widetilde{g}}{\left[ \int_{0}^{\i }\widetilde{\phi }\right] ^{\frac{1}{p}}}\right],
\]
with
\[
\widetilde{\psi }=\frac{(P\widetilde{\phi })(Q_{p}\widetilde{\phi })}{\left[ (PQ_{p})(\widetilde{\phi}) \right] ^{p^\prime+1}}.
\]
Now, the change of variable $t\rightarrow t^{-1}$ yields
\begin{align*}
\int_{0}^{\i }\widetilde{f}(t)\widetilde{\phi }(t)dt&=\int_{0}^{\i }\widetilde{f}(t^{-1})^{p}\widetilde{\phi }(t^{-1})t^{-2}dt\\
&=\int_{0}^{\i }f(t)^{p}\phi (t)t^{-p}dt,\\
\int_{0}^{\i }\widetilde{g}(t)dt &=\int_{0}^{\i }\widetilde{g}(t^{-1})t^{-2}dt=\int_{0}^{\i }g(t)dt\\
\end{align*}
and
\begin{align*}
\int_{0}^{\i }\widetilde{\phi }(t)dt &=\int_{0}^{\i }\widetilde{\phi }(t^{-1})t^{-2}dt=\int_{0}^{\i }\phi (t)t^{-p}dt=\infty
\end{align*}
Again,
\begin{align*}
\widetilde{\psi }(t) &=\frac{t^{-1}\int_{0}^{t}\widetilde{\phi }(s)dst^{p-1}\int_{t}^{\i }\widetilde{\phi}(s)s^{-p}ds}
{\left[ \left( PQ_{p}\right)(\widetilde{\phi })(t) \right] ^{p^\prime+1}}\\
&=\frac{t^{-1}\int_{0}^{t}\phi (s^{-1})s^{p-2}dst^{p-1}\int_{t}^{\i }\phi (s^{-1})s^{-2}ds}
{\left[ \frac{t^{-1}}{p}\int_{0}^{t}\phi (s^{-1})s^{p-2}ds+\frac{t^{p-1}}{p}\int_{t}^{\i }\phi (s^{-1})s^{-2}ds\right] ^{p^\prime+1}}\\
&=\frac{t^{-1}\int_{0}^{t^{-1}}\phi (s)dst^{p-1}\int_{t^{-1}}^{\i }\phi (s)s^{-p}ds}
{\left[\frac{t^{-1}}{p}\int_{t^{-1}}^{\i }\phi (s)s^{-p}ds+\frac{t^{p-1}}{p}\int_{0}^{t^{-1}}\phi (s)ds \right] ^{p^\prime+1}}\\
&=t^{p^\prime-2}\frac{(P\phi )(t^{-1})(Q_{p}\phi )(t^{-1})}{\left[\left( PQ_{p}\right)(\phi) (t^{-1})\right]^{p^\prime+1}}\\
&=\psi (t^{-1})t^{p^\prime-2}.
\end{align*}
So,
\begin{align*}
\int_{0}^{\i }(P\widetilde{g})(t)^{p}\widetilde{\psi }(t)dt &=\int_{0}^{\i }\left( t^{-1}\int_{0}^{t}g(s^{-1})s^{-2}ds\right) ^{p^\prime}\psi (t^{-1})t^{p^\prime-2}dt\\
&=\int_{0}^{\i }\left( \int_{0}^{t}g(s^{-1})s^{-2}ds\right) ^{p^\prime}\psi (t^{-1})t^{-2}dt\\
&=\int_{0}^{\i }\left( \int_{t}^{\i }g(s)ds\right) ^{p^\prime}\psi (t)dt.
\end{align*}
This completes the proof of (ii) when $b=\i $. In the case $b<\i $, a similar argument works if we replace the transformation $t\rightarrow t^{-1}$ by $t\rightarrow (b-t)^{-1}$.
\end{proof}

{\bf Proof of Theorem A.} 
We first show
\begin{equation}\label{3.3}
	\rph^{\prime}(g)\geq c\left[ \rps(g)+\frac{\int_{I_{b}}|g|}{\left[ \int_{I_{b}}\phi \right] ^{\frac{1}{p}}}
	\right],
\end{equation}
for some $c>0$ independent of $g\in \M_+ (I_b)$. To this end, it suffices, in view of \eqref{2.11}, to find constants $C$, $c>0$, independent of $g\in \rph^{\prime}$, to which there corresponds an $f\in \M_+(I_b)$, with $f\downarrow $, $\rph(f)\leq C$ and
\begin{equation}\label{3.4}
	\int_{I_{b}}fg^{*}\geq c\left[ \rps(g^{*})+\frac{\int _{I_{b}}g^{*}}{\left[ \int_{I_{b}}\phi \right] ^{\frac{1}{p}}}
	\right].
\end{equation}
Fixing $g$, we seek
$$ f= Qh$$ 
for some  $h$ in $\M_+(I_b)$.

We need a condition on $h$ to guarantee $\rph(Qh)<\i $.
But,
\begin{align*}
\rph(Qh) &=\left[ \int_{I_{b}}\left( (PQ)h\right)^{p}\phi\right] ^{\frac{1}{p }}\\
&=\left[ \int_{I_{b}}\left( Ph+Qh\right)^{p}\phi\right] ^{\frac{1}{p}}\\
&\leq 2\left[ \int_{I_{b}} (Sh)^{p}\phi\right] ^{\frac{1}{p}}\\
&\leq B\left[ \int_{I_{b}}h^{p}\psi ^{1-p}\right] ^{\frac{1}{p}},
\end{align*}
the last inequality being  proved in  the Appendix. The  desired condition on $h$ is thus
\[
\int_{I_{b}}h^{p}\psi ^{1-p}<\i .
\]
As pointed out in Section~\ref{s3}, the weighted Lebesgue norms  
\[
\left[ \int_{I_b}g^{p^\prime}\psi \right] ^{\frac{1}{p'}} \quad \text{and} \quad \left[ \int_{I_b}h^p\psi ^{1-p} \right] ^{\frac{1}{p}} ,\qquad  g,h\in \M_+(I_{b}),
\]
are dual to one another. Therefore, for our given $g\in L_{\rph^{\prime}}$, there exists $ h_{0}\in \M_+ (I_{b})$, such that
\[
\int_{I_{b}}h_{0}^{p}\psi ^{1-p}\leq 1
\]
and
\[
\int_{I_{b}}g^{*}Qh_{0}=\int_{I_{b}}h_{0}Pg^{*}\geq \frac12\left[ \int_{I_{b}}(g^{**})^{p^\prime}\psi\right] ^{\frac{1}{p^\prime}}=\frac12\rho_{{}_{p^\prime,\psi }}(g).
\]
If $\int_{I_{b}}\phi <\i $, the constant function with value 
$$\frac{1}{\left[ \int_{I_{b}}\phi \right] ^{\frac{1}{p}}}$$
 will belong to $\gph$ with norm $1$ and
\begin{equation}\label{3.5}
f:= Qh_{o}+\frac{1}{\left[ \int_{I_{b}}\phi \right] ^{\frac{1}{p }}}
\end{equation}
will satisfy 
\[
\int_{I_{b}}fg^{*}\geq \frac{\int_{I_{b}}g^{*}}{\left[ \int_{I_{b}}\phi \right] ^{\frac{1}{p }}}.
\]
Altogether, then, the function $f$ defined in \eqref{3.5} has $\rph(f)\leq C=B+1$ and satisfies \eqref{3.4} with $c=\frac12$. 

We now prove the inequality opposite to \eqref{3.3}, this being equivalent to 
	
\begin{equation}\label{3.6}
	\int_{I_{b}}f^{*}g^{*}\leq C\rph(f^{*})\left[ \rps(g^{*})+\frac{\int_{I_{b}}g^{*}}{\left[ \int_{I_{b}}\phi \right] ^{\frac{1}{p}}}
	\right] ,
\end{equation}
in which $C>0$ is independent of $f,g\in \M (X)$.

It suffices to consider $g^{*}$ of the form 
\[
g^{*}=k+Qh ,\quad k\geq 0 \quad \text{and} \quad h \in \M_+ (I_{b}).
\]
For the term 
\[
\frac{\int_{I_{b}}g^{*}}{\left[ \int_{I_{b}}\phi \right] ^{\frac{1}{p}}}
\]
to be finite we require $b=\mu (X)<\i $ or $\int_{I_{b}}\phi =\i $. In either case, the term is dominated by an absolute constant times $\rps(g^{*})$ and is irrelevant.

We have only to to consider those $g^*$ of the form $g^*=Qh$, $ h\in \M_+(I_b)$. For such $g^*$, 
\begin{equation}\label{3.7}
\begin{array}{ll} \int_{I_{b}}f^{*}g^*&=\int_{I_{b}}f^{*}Qh= \int_{I_{b}}hP(f^*)=\int_{I_{b}}f^{**}h\\
&=\int_{I_{b}}f^{**}h\left[(PQ_p)(\phi)\right]^{\frac{1}{p^\prime}+1}\left[(PQ_p)(\phi)\right]^{-\frac{1}{p^\prime}-1}\\
&=p^{-\frac{1}{p^\prime}-1}\int_{I_{b}}f^{**}h \left[\frac{P\phi+Q_p\phi}{(PQ_p)(\phi)}\right]^{\frac{1}{p^\prime}+1}\\
&\le\left(\frac{2}{p}\right)^{\frac{1}{p^\prime}+1}
\left[\int_{I_{b}}f^{**}h \left[\frac{P\phi}{(PQ_p)(\phi)}\right]^{\frac{1}{p^\prime}+1}
+\int_{I_{b}}f^{**}h \left[\frac{Q_p\phi}{(PQ_p)(\phi)}\right]^{\frac{1}{p^\prime}+1}\right]\\
&=\left(\frac{2}{p}\right)^{\frac{1}{p^\prime}+1}\left[I_1+ I_2\right].
\end{array}
\end{equation}

Since $f^{**}\downarrow$,  Lemma~\ref{l3.1}, (i), gives 
\begin{equation}\label{3.8}
 I_1\le C\rph(f)\left[ \left(\int_{I_{b}}\left(Ph\right)^{p^\prime}\psi\right)^{\frac{1}{p^\prime}}+
\frac{\int_{I_{b}}h}{\left[\int_{I_{b}}\phi\right]^{\frac{1}{p}}}\right].
\end{equation}
But,
$$g^{**}=Pg^{*} = (PQ)h= Ph+Qh\ge Ph$$
and
 $$ \int_{I_{b}}g^{*} =\int_{I_{b}}Qh=\int_{I_{b}}h,$$
whence \eqref{3.8} implies 
\begin{equation}\label{3.9}
I_1\le C\rph(f)\left[\rps(g^*)+\frac{\int_{I_{b}}g^{*}}{\left[\int_{I_{b}}\phi\right]^{\frac{1}{p}}}\right].
\end{equation}
  
  Observing that $\int_0^tf^* \uparrow$, Lemma~\ref{l3.1}, {\rm (ii)}, ensures
  
\begin{equation}\label{3.10}
I_2 \le C\left[ \int_{I_{b}}\left(\int_0^tf^*\right)^p \phi(t)t^{-p}dt\right]^{\frac{1}{p}}
\left(\int_{I_{b}}\left(Qh\right)^{p^\prime}\psi\right)^{\frac{1}{p^\prime}}
= C\rph(f^*)
\rps(g^*)
\end{equation}
  
  Combining \eqref{3.7}, \eqref{3.9} and \eqref{3.10} yields \eqref{3.6} and thereby completes the proof.  $ \square$
 
 \begin{cor} \label{phi} Let $\phi$  be a non-trivial weight function on $\R+$, and $\psi$ its dual weight. Then, 
 \begin{equation}\label{phps}
t^{-p'}\left[\int_0^t \psi(s)\,ds	+t^{p'}\int _t^\infty \psi(s)s^{-p'}\,ds\right]\ap \left[\int_0^t\phi(s)\,ds+ t^p\int_t^\infty \phi(s)s^{-p}\,ds\right]^{1-p'}.
\end{equation}
 \end{cor}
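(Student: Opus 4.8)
My plan is to deduce \eqref{phps} from Theorem~A by testing the K\"othe duality on the single family of characteristic functions $\chi_{{}_{(0,t)}}$, $t\in\R+$, together with the fundamental‑function identity \eqref{ff}, rather than by manipulating the explicit formula for $\psi$ directly. The first step is to compute the two fundamental functions involved. Since $\chi_{{}_{(0,t)}}^{*}=\chi_{{}_{(0,t)}}$, one has $\chi_{{}_{(0,t)}}^{**}(s)=\min\{1,t/s\}$, and therefore
\[
\rph(\chi_{{}_{(0,t)}})^{p}=\int_0^{t}\phi(s)\,ds+t^{p}\int_t^{\infty}\phi(s)s^{-p}\,ds=:\Phi(t),
\]
\[
\rps(\chi_{{}_{(0,t)}})^{p'}=\int_0^{t}\psi(s)\,ds+t^{p'}\int_t^{\infty}\psi(s)s^{-p'}\,ds=:\Psi(t),
\]
so that \eqref{phps} is precisely the assertion $t^{-p'}\Psi(t)\ap\Phi(t)^{1-p'}$.

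The second step is to combine the two available descriptions of $\rphd(\chi_{{}_{(0,t)}})$. On the one hand, since $\rph$ is an r.i. function norm, \eqref{ff} gives $\rphd(\chi_{{}_{(0,t)}})=t\big/\rph(\chi_{{}_{(0,t)}})=t\,\Phi(t)^{-1/p}$. On the other hand, Theorem~A applied to $g=\chi_{{}_{(0,t)}}$ gives $\rphd(\chi_{{}_{(0,t)}})\ap\rps(\chi_{{}_{(0,t)}})=\Psi(t)^{1/p'}$, the additional summand in Theorem~A being absent because $\int_{\R+}\phi=\infty$ (equivalently $\gph\not\supset L_\infty$). Equating the two expressions and raising to the power $p'$, and using $p'/p=\tfrac1{p-1}=p'-1$, i.e.\ $-p'/p=1-p'$, we obtain $\Psi(t)\ap t^{p'}\Phi(t)^{1-p'}$, which (dividing by $t^{p'}$) is \eqref{phps}.

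The one point that genuinely needs attention is the extra term $\int_X|g|\big/\big[\int_{I_b}\phi\big]^{1/p}$ in Theorem~A: it is exactly this term that spoils the two‑sided estimate once $\gph\supset L_\infty$, so the corollary must be read with the additional standing hypothesis $\int_{\R+}\phi=\infty$, which is the case relevant to the Boyd‑index computation of Theorem~B. If one prefers a proof that does not quote Theorem~A, \eqref{phps} can instead be established directly from $\psi=(P\phi)(Q_{p}\phi)\big/[(PQ_{p})(\phi)]^{p'+1}$ by estimating $\int_0^t\psi$ and $\int_t^{\infty}\psi(s)s^{-p'}\,ds$ separately through the Muckenhoupt balance conditions of Theorem~\ref{t3.1}, with weights $u$, $v$ assembled from $P\phi$ and $Q_{p}\phi$ in the manner of Lemma~\ref{l3.1}; this route is considerably more computational, the delicate part being the comparison of $\int_0^t\phi$ with $\Phi(t)$ near $0$ and near $\infty$.
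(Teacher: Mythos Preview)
Your argument is correct and is essentially identical to the paper's own proof: compute $\rph(\chi_{{}_{(0,t)}})$ and $\rps(\chi_{{}_{(0,t)}})$ explicitly, then combine Theorem~A with the fundamental-function identity \eqref{ff} to obtain \eqref{phps}. Your explicit flag that the extra summand in Theorem~A must vanish---i.e.\ that one needs $\int_{\R+}\phi=\infty$---is a point the paper leaves implicit when it simply asserts that $\rph$ and $\rps$ are associate norms.
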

 \begin{proof} It is easy to see that
 \[\rph(\chi_{{}_{(0,t)}})=\left(\int_0^t\phi(s)\,ds+ t^p\int_t^\infty \phi(s)s^{-p}\,ds\right)^{\frac1{p}}\]
 and 
 \[\rps(\chi_{{}_{(0,t)}})=\left(\int_0^t \psi(s)\,ds	+t^{p'}\int _t^\infty \psi(s)s^{-p'}\,ds\right)^{\frac1{p^\prime}}.\]
    Since $\rph$  and $\rps$  are  associate  r.i. function norms, 
    \eqref{phps} now follows from \eqref{ff}. 
    \end{proof}
 
 \begin{cor}\label{co} Fix $p\in (1,\i)$ and  suppose $\phi$ is a  non-trivial weight function on $\R+$, with 
 \[
\int_0^\infty\phi(t)dt=\i. 
\]
 Then,
 \begin{equation}\label{O2}
 \sup_{f\in\O(\R+)}\frac{\int_0^\i fg}{\left(\int_0^\i f^p\phi\right)^{\frac1p}}\ap  \left(\int_0^\i(Sg)^{p^\prime}\psi\right)^{\frac1{p^\prime}}, \quad g\in \M_+(\R+), 
	\end{equation}
 in which $\psi$ is the weight  dual to $\phi$ and 
 $$\O(\R+):=\{f\in \M_+(\R+):\,\, tf(t)\uparrow \,\, \text{and} \,\, f \downarrow\}$$
 \end{cor}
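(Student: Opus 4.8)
The plan is to reduce the supremum over $\O(\R+)$ to a weighted norm inequality for the Stieltjes operator and then invoke Theorem~\ref{t3.2}. First I would describe the class $\O(\R+)$ concretely: if $f\in\O(\R+)$, then $f\downarrow$ and $tf(t)\uparrow$, so $f$ is \emph{equivalent} to a function of the form $Qh$ for some $h\in\M_+(\R+)$. Indeed, any $f$ with $f\downarrow$ and $tf(t)\uparrow$ satisfies $f(t)\ap t^{-1}\int_0^t f \ap \int_t^\infty f(s)\frac{ds}{s} + f(\infty)$-type estimates; more precisely, writing $f=Ph+Qh$ would realize the two monotonicity constraints, since $Ph\uparrow/\!\!\downarrow$-behaviour and $(Qh)$-behaviour pin down exactly the ``$f\downarrow$ and $tf\uparrow$'' cone. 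So up to equivalence of constants, $\sup_{f\in\O}\frac{\int fg}{(\int f^p\phi)^{1/p}}\ap\sup_{h\in\M_+}\frac{\int (Qh)g}{(\int (Qh)^p\phi)^{1/p}}$, and one also has $\int_0^\infty (Qh)^p\phi = \int_0^\infty ((PQ_p)(\phi)\cdot\text{stuff})$... the cleanest route is to use, as in the proof of Theorem~A, that $\rph(Qh)=[\int (Ph+Qh)^p\phi]^{1/p}\ap[\int(Sh)^p\phi]^{1/p}$ via \eqref{3.31}, and that the Appendix inequality gives $[\int(Sh)^p\phi]^{1/p}\ap[\int h^p\psi^{1-p}]^{1/p}$ (the two-sided version; the condition $\int_0^\infty\phi=\infty$ is exactly what removes the extra boundary term, so the equivalence is genuinely two-sided here).

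Next, using $\int_0^\infty (Qh)g = \int_0^\infty h\,(Pg)$ by Fubini, the left side of \eqref{O2} becomes
\[
\sup_{h\in\M_+(\R+)}\frac{\int_0^\infty h\,(Pg)}{\left(\int_0^\infty h^p\psi^{1-p}\right)^{1/p}}
=\left(\int_0^\infty (Pg)^{p^\prime}\psi\right)^{1/p^\prime},
\]
where the last equality is just the duality of $L_p(\psi^{1-p})$ and $L_{p^\prime}(\psi)$ recorded in Section~\ref{s3}. So the left-hand side of \eqref{O2} is $\ap\left(\int_0^\infty (Pg)^{p^\prime}\psi\right)^{1/p^\prime}$.

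It then remains to show $\left(\int_0^\infty (Pg)^{p^\prime}\psi\right)^{1/p^\prime}\ap\left(\int_0^\infty (Sg)^{p^\prime}\psi\right)^{1/p^\prime}$. The inequality ``$\gtrsim$'' is trivial from $Pg\le Sg$ pointwise (up to the factor $2$) via \eqref{3.31}. For ``$\lesssim$'' one needs $\int (Qg)^{p^\prime}\psi\ls\int(Pg)^{p^\prime}\psi$, equivalently, by Theorem~\ref{t3.1} applied with $u=v=\psi^{1/p^\prime}$, $p=q=p^\prime$, that
\[
\sup_{r>0}\left(\int_0^r\psi\right)^{1/p^\prime}\left(\int_r^\infty (t^{p^\prime}\psi(t))^{1-p}\,dt\right)^{1/p}
\]
is dominated by the analogous $P$-functional; and both of these, via Corollary~\ref{phi} (which expresses $\int_0^t\psi + t^{p^\prime}\int_t^\infty\psi s^{-p^\prime}$ in terms of the $\phi$-functional), reduce to a manifestly finite quantity because $\int_0^\infty\phi=\infty$ forces the relevant $\phi$-expression to grow appropriately. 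I expect the main obstacle to be the first step — rigorously identifying $\O(\R+)$, up to constants, with the range of $Qh$ and simultaneously controlling $\rph(Qh)$ by $[\int h^p\psi^{1-p}]^{1/p}$ \emph{two-sidedly} — since the upper bound for $\rph(Qh)$ was already in Theorem~A's proof, but the matching lower bound $[\int h^p\psi^{1-p}]^{1/p}\ls\rph(Qh)$ uses the full two-sided Stieltjes estimate from the Appendix together with the hypothesis $\int_0^\infty\phi=\infty$; once that equivalence is in hand, the rest is Fubini, $L_p$-duality, and Theorem~\ref{t3.1}.
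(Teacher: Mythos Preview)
Your route diverges from the paper's and carries two genuine gaps.

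First, the description of $\O(\R+)$. The class $\{Qh:h\in\M_+(\R+)\}$ is \emph{not} $\O(\R+)$ up to equivalence: every nonnegative decreasing function vanishing at infinity can be written as $Qh$ (take $h(s)=-sf'(s)$ in the smooth case), so $\{Qh\}$ is essentially the cone of all decreasing functions, strictly larger than $\O(\R+)$, since $t(Qh)(t)$ need not increase. Replacing the supremum over $\O$ by the supremum over all $Qh$ therefore changes the quantity you are computing, and you give no argument that the two suprema coincide. The paper instead uses the characterization from \cite[p.~117]{BL}: $f\in\O(\R+)$ if and only if $\tfrac12 h^{**}\le f\le 2h^{**}$ for some $h\in\M_+(\R+)$. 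This is the crucial choice, because then the denominator $\bigl(\int f^p\phi\bigr)^{1/p}$ becomes exactly $\rph(h)$, and after the Fubini step $\int h^{**}g=\int h^*(Qg)$ the supremum is, by definition, $\rph'(Qg)$. Theorem~A (with the hypothesis $\int\phi=\infty$ killing the extra term) then gives $\rph'(Qg)\approx\rps(Qg)$, and since $(Qg)^{**}=P(Qg)=Pg+Qg\approx Sg$ one lands directly on $\bigl(\int(Sg)^{p'}\psi\bigr)^{1/p'}$.

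Second, even granting your parametrization, the equivalence $\bigl(\int(Qh)^p\phi\bigr)^{1/p}\approx\bigl(\int h^p\psi^{1-p}\bigr)^{1/p}$ is not available. The Appendix is explicitly heuristic, and the only rigorous inequality in the paper is the one-sided bound $\bigl(\int(Sh)^p\phi\bigr)^{1/p}\le B\bigl(\int h^p\psi^{1-p}\bigr)^{1/p}$ used in the proof of Theorem~A. Since $Qh\le Sh$, this yields only the upper estimate on the denominator; the lower estimate you flag as needed is precisely what is missing, and with it the upper bound for $\sup_\O$ collapses. Your final step, $\bigl(\int(Pg)^{p'}\psi\bigr)^{1/p'}\approx\bigl(\int(Sg)^{p'}\psi\bigr)^{1/p'}$, would also require an extra weighted Hardy inequality for $Q$ in $L_{p'}(\psi)$ that you do not verify. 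The paper's argument sidesteps all of this: by recognizing the denominator as $\rph(h)$ it never needs a two-sided Stieltjes estimate, and because $(Qg)^{**}$ (not $Pg$) appears at the end, the passage to $Sg$ is immediate from \eqref{3.31}.
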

 \begin{proof}
As pointed out in  \cite[p. 117]{BL}, $f\in \O(\R+)$ if and only if 
 $$ \frac12t^{-1}\int_0^t h^{*}(s)ds\le f(t)\le 2 t^{-1}\int_0^t h^{*}(s)ds,$$
  for some $h\in \M_+(\R+)$.
 Hence, the left side of \eqref{O2}, is equivalent to 
 \begin{align*}
 \sup_{h\in \M_+(\R+)}\frac{\int_0^\i t^{-1}\int_0^t h^{*}(s)dsg(t)dt}{\rph(h)}
 &=\sup_{h\in \M_+(\R+)}\frac{\int_0^\i h^{*}(t)\int_t^\i g(s)\frac{ds}{s}\,dt}{\rph(h)}\\
 &=\rph^\prime\left(\int_t^\i g(s)\frac{ds}{s}\right)\\
 &\ap\rps\left(\int_t^\i g(s)\frac{ds}{s}\right),
 \end{align*}
 which yields \eqref{O2}, in view of \eqref{3.31}, since 
  \begin{align*} \rps\left(\int_t^\i g(s)\frac{ds}{s}\right)
  &= \left(\int_0^\i \left(t^{-1}\int_0^t\int_s^\i g(y)\frac{dy}{y}ds\right)^{p^\prime}\psi(t)dt\right)^{\frac1{p^\prime}}\\
 &\ap \left(\int_0^\i(Sg)^{p^\prime}\psi\right)^{\frac1{p^\prime}}.
   \end{align*}
    \end{proof}

 \begin{thm}\label{tR} Fix $p,q\in (1,\i)$.  Suppose $\phi_1$ and $\phi_2$ are  weights on $\R+$, with  $\phi_1$ and  its dual weight $\psi_1$ as in Corollary~\ref{co}. Let $T$ be a positive linear operator on $\M_+(\R+)$
 with associate operator $T^\prime$. Then,
\begin{equation} \label{R1}
\left(\int_0^\i (Tf)^q\phi_2\right)^{\frac1q}\le C\left(\int_0^\i f^p\phi_1\right)^{\frac1p}, \quad f\in \O(\R+),
\end{equation}
  if and only if
\begin{equation} \label{R2}
  \left(\int_0^\i(ST^\prime)(h)^{p^\prime}\psi_1\right)^{\frac1{p^\prime}}\le K
  \left(\int_0^\i h^{q^\prime}\phi_2^{1-q^\prime}\right)^{\frac1{q^\prime}}, \quad h\in \M_+(\R+),
 	\end{equation}
 	 	or
\begin{equation} \label{R21}
  \left(\int_0^\i(TS)(h)^q\phi_2\right)^{\frac1q}\le K
 \left(\int_0^\i h^{p}\psi_1^{1-p}\right)^{\frac1{p}}, \quad h\in \M_+(\R+).
 	\end{equation}
	  Here, $K\ap C$.
	   \end{thm}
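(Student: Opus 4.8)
\textbf{Proof plan for Theorem~\ref{tR}.}
The plan is to reduce the equivalence of \eqref{R1}, \eqref{R2} and \eqref{R21} to two things already available: the duality characterization of Corollary~\ref{co} and the elementary duality between the weighted Lebesgue norms $\left(\int h^{q'}\phi_2^{1-q'}\right)^{1/q'}$ and $\left(\int g^q\phi_2\right)^{1/q}$ recalled in Section~\ref{s3}. First I would rewrite the left-hand side of \eqref{R1} by weighted-$L_q$ duality: \eqref{R1} holds with constant $C$ if and only if
\[
\sup_{f\in\O(\R+)}\ \sup_{h\in\M_+(\R+)}\frac{\int_0^\i (Tf)h}{\left(\int_0^\i f^p\phi_1\right)^{1/p}\left(\int_0^\i h^{q'}\phi_2^{1-q'}\right)^{1/q'}}\ap C.
\]
Interchanging the two suprema and moving $T$ onto $h$ via the associate operator, $\int (Tf)h=\int f\,(T'h)$, the inner supremum over $f\in\O(\R+)$ is exactly the quantity estimated in Corollary~\ref{co} with $g=T'h$ (and $\phi=\phi_1$, $\psi=\psi_1$). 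Hence \eqref{R1} with constant $C$ is equivalent to
\[
\sup_{h\in\M_+(\R+)}\frac{\left(\int_0^\i (S T'h)^{p'}\psi_1\right)^{1/p'}}{\left(\int_0^\i h^{q'}\phi_2^{1-q'}\right)^{1/q'}}\ap C,
\]
which is precisely \eqref{R2} with $K\ap C$. This step uses only that $T$ is a positive linear operator so that $f\mapsto Tf$ and $h\mapsto T'h$ are well defined on $\M_+$ and $\int(Tf)h=\int f(T'h)$; no boundedness of $T$ itself is needed.

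Next I would dispose of the equivalence \eqref{R2}$\iff$\eqref{R21}. This is again pure weighted-Lebesgue duality, but now applied on the \emph{other} side. The operator $h\mapsto S T'h$ and the operator $h\mapsto T S h$ are associate to one another, because $S$ is self-associate (it is symmetric: $\int (Sf)g=\int f(Sg)$, immediate from $(s+t)^{-1}$ being symmetric in $s,t$) and $(AB)'=B'A'$. Therefore the inequality \eqref{R2}, read as boundedness $L_{q'}(\phi_2^{1-q'})\to L_{p'}(\psi_1)$ of $ST'$, is equivalent by taking Banach-space adjoints to boundedness of its associate $(ST')'=TS'=TS$ from $L_{p}(\psi_1^{1-p})$ (the dual of $L_{p'}(\psi_1)$) into $L_{q}(\phi_2)$ (the dual of $L_{q'}(\phi_2^{1-q'})$), with the same constant up to absolute factors; this is exactly \eqref{R21}. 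One should note the harmless point that all operators here are positivity-preserving, so the suprema defining the norms may be taken over $\M_+(\R+)$ rather than over signed functions, which is what makes the adjoint computation legitimate at the level of these cone inequalities.

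The main obstacle, such as it is, is bookkeeping rather than substance: one must make sure that the passage ``$\int(Tf)h=\int f(T'h)$ and the supremum over $\O(\R+)$ feeds Corollary~\ref{co}'' is applied with the correct weight identification ($\phi=\phi_1$, and the dual weight appearing in Corollary~\ref{co} is $\psi_1$, which is legitimate precisely because of the hypothesis $\int_0^\i\phi_1=\i$ placed on $\phi_1$), and that one does not need any a priori finiteness of the quantities involved — the equivalences are between (possibly infinite) best constants, so each implication is read as ``if the right side is finite with bound $K$ then the left side is finite with bound $\le cK$'', and conversely. I would also remark at the end that $S$ being comparable to $P+Q$ by \eqref{3.31} is what lets one, if desired, replace $ST'$ and $TS$ by $(P+Q)T'$ and $T(P+Q)$; but that is a convenience, not a necessity, for the statement as given. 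Finally I would record that the chain of equivalences gives $K\ap C$ throughout, the implied constants depending only on $p$, $q$ and on the constants in \eqref{3.31} and in Corollary~\ref{co}, hence not on $T$, $\phi_2$, $f$ or $h$.
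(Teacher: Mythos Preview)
Your proposal is correct and follows essentially the same route as the paper: rewrite \eqref{R1} via weighted-$L_q$ duality (the paper calls this the reverse H\"older inequality), pass from $\int(Tf)h$ to $\int f(T'h)$, take the supremum over $f\in\O(\R+)$ and invoke Corollary~\ref{co} to obtain \eqref{R2}, and then deduce the equivalence \eqref{R2}$\iff$\eqref{R21} from the self-adjointness of $S$ and weighted-Lebesgue duality. Your write-up is in fact more explicit than the paper's about why each step is legitimate (the role of the hypothesis $\int_0^\i\phi_1=\i$, the symmetry of $S$, and the handling of possibly infinite constants).
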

 \begin{proof}
  The reverse H\"older inequality ensures that 
 \eqref{R1} is equivalent to 
 \[
\frac{\int_0^\i (Tf)h}{\left(\int_0^\i h^{q^\prime}\phi_2^{1-q^\prime}\right)^{\frac1{q^\prime}}}\ls 
\left(\int_0^\i f^p\phi_1\right)^{\frac1p}, \quad f\in \O(\R+), \,\, h\in \M_+(\R+),
	\]
 or 
  \begin{equation}\label{R3}
\frac{\int_0^\i f(T^\prime h)}{\left(\int_0^\i f^p\phi_1\right)^{\frac1p}}\ls \left(\int_0^\i h^{q^\prime}\phi_2^{1-q^\prime}\right)^{\frac1{q^\prime}}, \quad f\in \O(\R+), \,\, h\in \M_+(\R+).
	\end{equation}
 In view of Corollary~\ref{co}, \eqref{R3} amounts to
 $$
 \r_{{}_{p^\prime,\psi_1}}((ST^\prime) (h))\ls
 \left(\int_0^\i h^{q^\prime}\phi_2^{1-q^\prime}\right)^{\frac1{q^\prime}}, \quad h\in \M_+(\R+),
 $$
 that is, \eqref{R2}. 
As we have 
$$\int_0^\i (ST^\prime)h(t) g(t)dt = \int_0^\i h(t)(TS)g(t)dt, \quad h, g\in \M_+(\R+),$$
\eqref{R2} is equivalent to \eqref{R21}, by the  duality theorem for
 weighted Lebesgue spaces . 
\end{proof} 
  
 \section{Imbeddings and Boyd indices} \label{s5}
 
 \begin{thm}\label{tI}
 Fix $p,q\in (1,\i)$.  Suppose $\phi_1$ and $\phi_2$ are  weights  on $\R+$, with $\phi_1$ and its dual weight $\psi_1$ as in Corollary~\ref{co}. Then, the (possibly infinite) norm of 
 the imbedding
 \begin{equation}\label{I1}
 \gphp(\R+)\hookrightarrow\gphq(\R+)
\end{equation}	
is equivalent to 
\begin{equation}\label{I2}
		\sup_{t>0}\frac{\left[\int_0^t\phi_2(s)\,ds+ t^q\int_t^\infty \phi_2(s)s^{-q}\,ds\right]^{\frac1{q}}}{\left[\int_0^t\phi_1(s)\,ds+ t^p\int_t^\infty \phi_1(s)s^{-p}\,ds\right]^{\frac1{p}}}, 
\end{equation}
if $1<p\le q<\i$, and to
\begin{equation}\label{I3}
\left[\int_0^\i\left[\frac{\int_0^t\phi_2(s)\,ds+ t^q\int_t^\infty \phi_2(s)s^{-q}\,ds}{\int_0^t\phi_1(s)\,ds+ t^p\int_t^\infty \phi_1(s)s^{-p}\,ds}\right]^{\frac{q}{p-q}}\phi_2(t)dt\right]^{\frac1{q}-\frac 1p},
 \end{equation}
 if $1<q< p <\i$.
  \end{thm}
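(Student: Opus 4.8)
The plan is to identify the (possibly infinite) norm of the imbedding \eqref{I1} with the least constant $C$ in an inequality of the form \eqref{R1}, to invoke Theorem~\ref{tR}, and then to read off the value from Theorem~\ref{t3.2}, using Corollary~\ref{phi} to eliminate the dual weight $\psi_1$ in favour of $\phi_1$.

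First I would reduce everything to the cone $\O(\R+)$. Both $\rphp$ and $\rphq$ depend on $f$ only through $f^{**}=Pf^{*}$, and, as recorded in the proof of Corollary~\ref{co}, the functions $Pf^{*}$ fill out — to within the absolute constant $2$ — all of $\O(\R+)$, on which moreover $\rphp(f)\approx(\int_0^\infty f^{p}\phi_1)^{1/p}$ and $\rphq(f)\approx(\int_0^\infty f^{q}\phi_2)^{1/q}$. Since the quotient $\rphq(f)/\rphp(f)$ is scale invariant, the imbedding norm is therefore comparable to the least $C$ such that
\[
\Bigl(\int_0^\infty f^{q}\phi_2\Bigr)^{1/q}\le C\Bigl(\int_0^\infty f^{p}\phi_1\Bigr)^{1/p},\qquad f\in\O(\R+),
\]
that is, to the least $C$ in \eqref{R1} with $T=\mathrm{id}$ (so that $T'=\mathrm{id}$). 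Theorem~\ref{tR} then gives that this is comparable to the least $K$ in \eqref{R21} with $T=\mathrm{id}$, i.e. in
\[
\Bigl(\int_0^\infty (Sh)^{q}\phi_2\Bigr)^{1/q}\le K\Bigl(\int_0^\infty h^{p}\psi_1^{1-p}\Bigr)^{1/p},\qquad h\in\M_+(\R+),
\]
and for this Stieltjes inequality Theorem~\ref{t3.2}, applied with $u=\phi_2$ and $v=\psi_1^{1-p}$ (so that $v^{1-p'}=\psi_1$), produces the least $K$: the supremum \eqref{a3.1} when $1<p\le q<\infty$, the iterated integral when $1<q<p<\infty$.

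It then remains to bring these characterizations to the stated shape. From the elementary two-sided estimate $\int_0^\infty(\tfrac{r}{s+r})^{\sigma}w(s)\,ds\approx\int_0^{r}w+r^{\sigma}\int_{r}^{\infty}w(s)s^{-\sigma}\,ds$ ($\sigma>1$, $r>0$) one gets $\bigl(\int_0^\infty(\tfrac{t}{s+t})^q\phi_2(s)\,ds\bigr)^{1/q}\approx\rphq(\chi_{{}_{(0,t)}})$, while the second factor, rewritten as $\bigl(\int_0^\infty\tfrac{\psi_1(s)}{(s+t)^{p'}}\,ds\bigr)^{1/p'}\approx\bigl(t^{-p'}[\int_0^t\psi_1+t^{p'}\int_t^\infty\psi_1(s)s^{-p'}ds]\bigr)^{1/p'}$, is by Corollary~\ref{phi} comparable to $\bigl(\int_0^t\phi_1+t^p\int_t^\infty\phi_1(s)s^{-p}ds\bigr)^{-1/p}=1/\rphp(\chi_{{}_{(0,t)}})$. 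Substituting, the supremum \eqref{a3.1} collapses to $\sup_{t>0}\rphq(\chi_{{}_{(0,t)}})/\rphp(\chi_{{}_{(0,t)}})$, which is exactly \eqref{I2}; and in the iterated integral the inner bracket becomes $\approx\rphq(\chi_{{}_{(0,t)}})^{q/p}/\rphp(\chi_{{}_{(0,t)}})$, whence after collecting the powers of $\rphq(\chi_{{}_{(0,t)}})$ and $\rphp(\chi_{{}_{(0,t)}})$ (they come out as $\tfrac{q^{2}}{p-q}$ and $-\tfrac{pq}{p-q}$ under the outer exponent $\tfrac1q-\tfrac1p$) one obtains \eqref{I3}.

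This chain is essentially mechanical once Theorems~\ref{tR} and~\ref{t3.2} are in hand, so the one genuinely substantive ingredient is Corollary~\ref{phi}, i.e. the knowledge of the fundamental function of the associate space of $\gphp$: it is precisely this that lets $\psi_1$ be traded for $\phi_1$ and thereby yields the clean symmetric expressions \eqref{I2}, \eqref{I3}. The main point demanding care is the opening reduction — one must verify that confining the test functions in \eqref{I1} to $\O(\R+)$ costs no more than an absolute constant, which rests on the equivalence of $\{f^{**}\}$ with $\O(\R+)$ and the scale invariance of $\rphq(f)/\rphp(f)$ — together with the routine bookkeeping of conjugate exponents when $u=\phi_2$, $v=\psi_1^{1-p}$ are fed into Theorem~\ref{t3.2}; note also that the standing hypothesis on $\phi_1$ (as in Corollary~\ref{co}) is exactly what makes Theorem~\ref{tR} and Corollary~\ref{phi} applicable and what makes the extra summand of Theorem~A disappear.
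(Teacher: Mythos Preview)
Your proof is correct and follows essentially the same route as the paper: reduce the imbedding norm to the least constant in $(\int f^{q}\phi_2)^{1/q}\le C(\int f^{p}\phi_1)^{1/p}$ over $f\in\O(\R+)$, apply Theorem~\ref{tR} with $T=\mathrm{id}$ to pass to the Stieltjes inequality, invoke Theorem~\ref{t3.2}, and simplify via the kernel estimate and Corollary~\ref{phi}. Your write-up is in fact more explicit than the paper's about why the reduction to $\O(\R+)$ costs only an absolute constant, and your bookkeeping of the exponents in the $q<p$ case is accurate.
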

\begin{proof} The imbedding  \eqref{I1} is equivalent  to an inequality of the form 
\begin{equation} \label{I4}
	\r_{{}_{q,\phi_2}}(If)\le C\r_{{}_{p,\phi_1}}(f), \quad f\in \O(\R+),
\end{equation}
or 
\begin{equation} \label{I61}
  \left(\int_0^\i (If)^q\phi_2\right)^{\frac1q}\le C  \left(\int_0^\i f^p\phi_1\right)^{\frac1p}, \quad f\in\O(\R+),
 	\end{equation}
 	in which $I$ is the identity operator. According to Theorem~\ref{tR}, \eqref{I61} reduces to 
\begin{equation} \label{I6}
  \left(\int_0^\i(S h)^q\phi_2\right)^{\frac1q}\le K  \left(\int_0^\i h^p\psi_1^{1-p}\right)^{\frac1p}, \quad h\in \M_+(\R+);
 	\end{equation}
 	here, $K\ap C$ and  
 	\[ \psi_1(t) =\frac{(P\phi_1)(t)(Q_{p}\phi_1)(t)}{\left[(PQ_{p})(\phi_1)\right] ^{p^\prime+1}}.
 	 	\] 
 By Theorem~\ref{t3.2}, the least possible $K$ in \eqref{I6} is equivalent to 
 \begin{equation}\label{I7}
\sup_{t>0}\left[\int_0^\i \frac{\psi_1(s)}{(s+t)^{p^\prime}}ds\right]^{\frac{1}{p^\prime}}
	\left[\int_0^\i \left(\frac{t}{s+t}\right)^{q}\phi_2(s)ds\right]^{\frac{1}{q}}
\end{equation}
 	when $1<p\le q<\i$, and to 
 	\begin{equation}\label{I8}
\left[\int_0^\i \left[\int_0^\i \frac{\psi_1(s)}{(s+t)^{p^\prime}}ds\right]^{\frac{(p-1)q}{p-q}}	
\left[\int_0^\i \left(\frac{t}{s+t}\right)^{q}\phi_2(s)ds\right]^{\frac{q}{p-q}}\phi_2(t)dt\right]^{\frac1q-\frac1p}, 
\end{equation}
 	when $1<q<p<\i$.
  	But,
 	$$\int_0^\i \left(\frac{t}{s+t}\right)^{q}\phi_2(s)ds\ap \int_0^t\phi_2(s)ds +t^q\int_t^\i \phi_2(s)s^{-q}ds
 	$$ 
 	and 
 \begin{align*} \int_0^\i \frac{\psi_1(s)}{(s+t)^{p^\prime}}ds
 	 	&\ap t^{-p^\prime}\int_0^t\psi_1(s)ds +\int_t^\i \psi_1(s)s^{-p^\prime}ds\\
 	 	& =t^{-p^\prime}\left[ \int_0^t\psi_1(s)ds +t^{p^\prime} \int_t^\i \psi_1(s)s^{-p^\prime}ds  \right]\\
 	 	&\ap\left[ \int_0^t\phi_1(s)ds +t^{p} \int_t^\i\phi_1(s) s^{-p}ds  \right]^{1-p^\prime},
 	 	\end{align*}
 	 	by Corollary~\ref{phi}, so \eqref{I7}  becomes \eqref{I2} and  \eqref{I8} becomes \eqref{I3}.
\end{proof}
\begin{thm}\label{tB} 
Fix an index $p$, $1<p<\i$ and suppose $\phi$ is  a non-trivial  weight on $\R+$. Take $\r=\rph$ on $\M_+(\R+)$. Then,
\begin{equation}\label{B1}h_\r(t)\ap M_\r(t)\ap\sup_{s\in \R+}\left[\frac{\int_0^{st}\phi(y)dy +s^pt^p\int_{st}^{\infty} \phi(y)y^{-p}dy}{\int_0^{s}\phi(y)dy +s^p\int_{s}^{\infty} \phi(y)y^{-p}dy}\right]^{\frac1p}, \quad t\in \R+,
\end{equation}
and
\begin{equation}\label{B2} i_\r= \underline{i}_\r, \quad  I_\r =\underline{I}_\r.
\end{equation}
\end{thm}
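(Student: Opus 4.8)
The plan is to obtain \eqref{B1} from Theorem~\ref{tI} applied with a dilated weight, and then deduce \eqref{B2} as a formal consequence of the fact that the supremand in \eqref{B1} has a separated, multiplicative structure in the two variables. First I would record the reduction for $h_\r(t)$: by definition $h_\r(t)=\|E_{1/t}\|_{L_\r\to L_\r}$, and $E_{1/t}$ maps $L_{\rph}(\R+)$ boundedly into itself with norm equal to the norm of the imbedding $\gph(\R+)\hookrightarrow\Gamma_{p,\phi_t}(\R+)$, where $\phi_t(y):=\phi(y/t)\cdot t^{-1}$ (the precise normalization to be pinned down by the change of variables $y\mapsto y/t$ inside $\rph(E_{1/t}f)=\rph(f(\cdot/t))$). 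Concretely, one checks $\rph\bigl(f(\cdot/t)\bigr)=\bigl[\int_0^\infty f^{**}(y/t)^p\phi(y)\,dy\bigr]^{1/p}$ and, since $(f(\cdot/t))^{**}(y)=f^{**}(y/t)$, the substitution $y=ts$ turns this into $t^{1/p}\bigl[\int_0^\infty f^{**}(s)^p\phi(ts)\,ds\bigr]^{1/p}$, identifying the relevant second weight as $y\mapsto\phi(ty)$ (the factor $t^{1/p}$ cancels in the ratio defining $h_\r$). Thus $h_\r(t)$ equals the norm of $\gph\hookrightarrow\Gamma_{p,\phi(t\,\cdot)}$, which, being an imbedding between two $\Gamma$-spaces with the \emph{same} index $p$ (so the $1<p\le q<\infty$ branch with $p=q$ applies), is equivalent by Theorem~\ref{tI}, formula \eqref{I2}, to
\[
\sup_{s>0}\left[\frac{\int_0^s\phi(ty)\,dy+s^p\int_s^\infty\phi(ty)y^{-p}\,dy}{\int_0^s\phi(y)\,dy+s^p\int_s^\infty\phi(y)y^{-p}\,dy}\right]^{\frac1p}.
\]
A change of variable $y\mapsto y/t$ in the two integrals of the numerator converts it to $t^{-1}\int_0^{st}\phi(y)\,dy+s^pt^{p-1}\int_{st}^\infty\phi(y)y^{-p}\,dy$; pulling out $t^{-1}$ and writing $s^pt^p\cdot t^{-1}=t^{-1}(st)^p\cdots$, the factor $t^{-1/p}$ again cancels against the denominator after a matching manipulation, and one arrives exactly at the right side of \eqref{B1}. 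The same computation applied to $M_\r(t)$ uses \eqref{ff}: $M_\r(t)=\sup_{a>0}\rph(\chi_{(0,at)})/\rph(\chi_{(0,a)})$ and, by the formula $\rph(\chi_{(0,r)})=\bigl(\int_0^r\phi+r^p\int_r^\infty\phi(y)y^{-p}dy\bigr)^{1/p}$ proved in the course of Corollary~\ref{phi}, this is \emph{literally} the supremum in \eqref{B1} restricted to the ``diagonal'' by setting $r=a$, $r=at$; so $M_\r(t)$ equals the right side of \eqref{B1} without even needing Theorem~\ref{tI}. Comparing, $h_\r(t)\gtrsim M_\r(t)$ is automatic (characteristic functions are admissible test functions), and the reverse $h_\r(t)\lesssim(\text{sup in }\eqref{B1})=M_\r(t)$ is what Theorem~\ref{tI} delivers, giving $h_\r\ap M_\r\ap(\text{the stated supremum})$.

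For \eqref{B2}, the point is that from $h_\r\ap M_\r$ one gets, taking logarithms and using $\log h_\r(t)=\log M_\r(t)+O(1)$ with the $O(1)$ absorbed in the limits $t\to0^+$, $t\to\infty$ defining the indices,
\[
i_\r=\sup_{0<t<1}\frac{\log h_\r(t)}{\log t}=\sup_{0<t<1}\frac{\log M_\r(t)}{\log t}=\underline{i}_\r,
\]
and similarly $I_\r=\underline I_\r$; here one uses that $M_\r$ is submultiplicative (so $\log M_\r$ is subadditive and the indicated sup/inf are genuine limits, unaffected by a bounded multiplicative perturbation), together with the chain $0\le i_\r\le\underline i_\r\le\underline I_\r\le I_\r\le1$ already recorded in Section~\ref{s2}. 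I would spell out the elementary fact that if $f,g\colon\R+\to\R+$ satisfy $c^{-1}f\le g\le cf$ then $\sup_{0<t<1}\frac{\log f(t)}{\log t}=\sup_{0<t<1}\frac{\log g(t)}{\log t}$ (because $\log t<0$ on $(0,1)$ and $|\log f(t)-\log g(t)|\le\log c$, so the difference of the two quotients is $O(1/|\log t|)\to0$), and the analogous statement on $(1,\infty)$; applying it to $f=h_\r$, $g=M_\r$ finishes \eqref{B2}.

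The main obstacle I anticipate is bookkeeping rather than conceptual: carrying out the two nested changes of variables in $\rph(E_{1/t}f)$ and in the numerator of \eqref{I2} while tracking the powers of $t$ so that every stray $t^{\pm1/p}$ cancels, and confirming that the ``$p=q$'' specialization of Theorem~\ref{tI} is legitimate (the statement there requires $1<p\le q<\infty$, which is satisfied with equality, and the Muckenhoupt-type constant \eqref{I2} is exactly the one needed). A secondary subtlety is justifying that $h_\r(t)$ for the space $\gph$ is computed on decreasing functions \emph{and} that this coincides with the imbedding norm into the dilated-weight $\Gamma$-space over the cone $\O(\R+)$ as used in Theorem~\ref{tI}; this follows because for $f\in\M_+(\R+)$ one has $f^{**}\in\O(\R+)$ and $\rph(f)=\rph(f^{**})$ up to the equivalence built into the definition via $f^{**}$, so the reduction to $\O(\R+)$ is exactly the one already exploited in the proof of Theorem~\ref{tR} via \cite[p.~117]{BL}. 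Once these normalizations are fixed, both \eqref{B1} and \eqref{B2} drop out with no further analysis.
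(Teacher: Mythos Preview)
Your approach is essentially the same as the paper's: both identify $h_\r(t)$ with the norm of the imbedding $\gph\hookrightarrow\Gamma_{p,\overline\phi}$ for the dilated weight $\overline\phi(s)=t\phi(ts)$, invoke Theorem~\ref{tI} (case $p=q$) to reduce that norm to the supremum of $\rho_{p,\overline\phi}(\chi_{(0,s)})/\rph(\chi_{(0,s)})$, and then compute this ratio explicitly using the formula for $\rph(\chi_{(0,r)})$ to recognize $M_\r(t)$. Your treatment of \eqref{B2} via the limit formulation of the indices (Fekete-type submultiplicativity) is correct and is in fact more detailed than the paper, which proves only \eqref{B1} and leaves \eqref{B2} implicit; one minor slip in your bookkeeping is that the factor $t^{-1/p}$ from the numerator integral cancels against the \emph{prefactor} $t^{1/p}$ arising in $\rph(E_{1/t}f)$, not ``against the denominator,'' but this does not affect the argument.
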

\begin{proof} For $f\in \M_+(\R+)$, $f$ decreasing, we have 
$$\left(E_{\frac1t}f\right)^{**}(s)=f^{**}\left(\frac st\right), \quad s\in \R+,$$
so 
$$\rph\left(E_{\frac1t}f\right)=\r_{{}_{p,\overline{\phi}}}(f),$$
where 
$$\overline{\phi}(s)=t\phi(st).$$
Thus, for $t\in \R+$,
\begin{align*}
	h_\r(t)&=\sup_{f\in \M_+(\R+) \atop 
	f\downarrow}\frac{\rph\left(E_{\frac1t}f\right)}{\rph\left(f\right)}\\
&=\sup_{f\in \M_+(\R+)}\frac{\r_{{}_{p,\overline{\phi}}}(f)}{\rph\left(f\right)}\\
&\ap \sup_{s\in \R+}
\frac{\r_{{}_{p,\overline{\phi}}}\left(\chi_{{}_{(0,s)}}\right)}
{\rph\left(\chi_{{}_{(0,s)}}\right)}, \quad \text{by Theorem \ref{tI}},\\
&\ap\sup_{s\in \R+}
\left[\frac{\int_0^{s}t\phi(ty)dy +s^p\int_{s}^{\i} t\phi(ty)y^{-p}dy}{\int_0^{s}\phi(y)dy +s^p\int_{s}^{\i} \phi(y)y^{-p}dy}\right]^{\frac1p}\\
&\ap\sup_{s\in \R+}\left[\frac{\int_0^{st}\phi(y)dy +s^pt^p\int_{st}^{\i} \phi(y)y^{-p}dy}{\int_0^{s}\phi(y)dy +s^p\int_{s}^{\i} \phi(y)y^{-p}dy}\right]^{\frac1p}\\
&\ap \sup_{s\in \R+}\frac{\rph(\chi_{{}_{(0,st)}})}{\rph(\chi_{{}_{(0,s)}})}\\
&=M_\r(t).
\end{align*}
\end{proof}

\begin{remark} The formula \eqref{B1}, now proved, is the one asserted in Theorem~B.
\end{remark}


\section{Calder\'on-Zygmund Operators} \label{s6}

A function $K$, on $\Rn \setminus\{0\}$,  locally integrable away from the
origin, is said to be a Calder\'on-Zygmund (CZ) kernel, 
provided it satisfies the following four conditions:

{\rm (i)} There exists a constant $C_1>0$, independent  of $\ve$ and $N$, $ 0 <\ve < N$, 
such that 
\[\left| \int_{\ve<|x|<N}K(x)dx\right|\le C_1;
\]
moreover, for each $N>0$, one has the existence of
\[ \lim_{\ve\to0+}\int_{\ve<|x|<N}K(x)dx.
\]

{\rm(ii)} There exists a constant $C_2>0$, independent  of $R>0$, for which
\[\int_{|x|<R}|x||K(x)|dx\le C_2R.
\]
{\rm (iii)} There exists a constant $C_3>0$, independent  of $y\in \Rn\setminus\{0\}$, with
\[\int_{|x|>2|y|}|K(x-y)-K(x)|dx \le C_3.
\]
{\rm (iv)} There exists a constant $C_4>0$, independent  of $R>0$ and of points $x_1$, $x_2$ and $x_3$ in $\Rn$ within  a distance $\frac R2$ of one another and each a distance 
greater then $R$ from $y$, such that 
\[|K(x_1-y)-K(x_2-y)|\le C_4\frac{|x_1-x_2|}{|x_3 -y|^{n+1}}.
\]

The Calder\'on-Zygmund operator, $T_K$, with kernel $K$, is the singular integral operator 

\[(T_Kf)(x):=\lim_{\ve\to 0+}\int_{|x-y|>\ve}K(x-y)f(y)dy,\quad x\in \Rn,
\] 
which is defined a.e. for all $f\in \M(\Rn)$ with 
\[\int_{\Rn}\frac{|f(y)|}{1+|y|^n}dy<\i.
\] 

\

\begin{thm}\label{TCZSI} Fix $p$, $1<p<\i$, and suppose the weight $\phi$ on $\R+$  satisfies
\[\int_0^\i \phi(s)\min\left[1,s^{-p}\right]\,ds<\i \quad\text{and}\quad \int_0^\i\phi(s)\max\left[1, s^{-p}\right]ds=\i.
\]  

Denote by $\psi$ the function defined in \eqref{3.1}.

Let $T_K$ be a CZ operator. Then, one has 
\begin{equation}\label{CZ1}
T_K: \gph(\Rn)\to \gph(\Rn)
\end{equation}
 if there  exists $c$, $0<c<1$, such that for all $t\in \R+$, 

\begin{equation}\label{CZ}
\begin{array}{ll}
	\int_{0}^{ct}\phi(s)ds+ c^pt^p \int_{t}^{\i}\phi(s)s^{-p}ds
	&\le\frac12
\left[\int_{0}^{t}\phi(s)ds+ t^p \int_{t}^{\i}\phi(s)s^{-p}ds\right]\\
\\
\int_{0}^{ct}\psi(s)ds+ c^{p^\prime}t^{p^\prime} \int_{t}^{\i}\psi(s)s^{-{p^\prime}}ds
&\le\frac12
\left[\int_{0}^{t}\psi(s)ds+ t^{p^\prime} \int_{t}^{\i}\psi(s)s^{-{p^\prime}}ds\right]. 
 \end{array} 
 \end{equation}
\end{thm}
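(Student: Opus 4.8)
The goal is to deduce boundedness of a Calderón–Zygmund operator $T_K$ on $\gph(\Rn)$ from the two dilation-type conditions in \eqref{CZ}. The standard route is via Boyd's interpolation theorem: if the lower and upper Boyd indices $i_\r,I_\r$ of an r.i.\ norm $\r$ on $\R+$ satisfy $0<i_\r\le I_\r<1$, then every CZ operator maps $\lr(\Rn)$ into itself. So the plan is to show that \eqref{CZ}, together with the hypotheses on $\phi$, forces $0<i_\r<I_\r<1$ for $\r=\rph$, and then invoke Boyd's theorem. By Theorem~\ref{tB}, $i_\r=\underline i_\r$ and $I_\r=\underline I_\r$, and these fundamental indices are computed from $M_\r(s)$, which by \eqref{B1} and the formula $\rph(\chi_{(0,t)})=\bigl(\int_0^t\phi + t^p\int_t^\i\phi(y)y^{-p}dy\bigr)^{1/p}$ is
\[
M_\r(s)\ap\sup_{t>0}\left[\frac{\int_0^{st}\phi(y)dy+s^pt^p\int_{st}^{\i}\phi(y)y^{-p}dy}{\int_0^{t}\phi(y)dy+t^p\int_{t}^{\i}\phi(y)y^{-p}dy}\right]^{\frac1p}.
\]

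\textbf{Step 1: the upper index.} I would first show $I_\r<1$. Write $\Phi(t):=\int_0^t\phi(y)dy+t^p\int_t^\i\phi(y)y^{-p}dy$, so that $\rph(\chi_{(0,t)})=\Phi(t)^{1/p}$ and $M_\r(s)\ap\sup_t[\Phi(st)/\Phi(t)]^{1/p}$. The first inequality in \eqref{CZ}, namely $\Phi(ct)\le\tfrac12\,\Phi(t)$ for all $t$ (after noting $\int_0^{ct}\phi+c^pt^p\int_t^\i\phi(y)y^{-p}dy\ge\Phi(ct)$ since the tail integral over $(t,\i)$ dominates that over $(ct,\i)$ up to the factor $c^p<1$ — this comparison needs a short check), gives $\Phi(c^n t)\le 2^{-n}\Phi(t)$ by iteration. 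Hence $M_\r(c^n)\ls 2^{-n/p}$, which by the definition of $\underline I_\r$ yields $\underline I_\r\le \frac{\log 2^{-1/p}}{\log c}<1$; since $c<1$ and $2^{-1/p}<1$ this quantity is a number strictly less than $1$. Therefore $I_\r=\underline I_\r<1$.

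\textbf{Step 2: the lower index via duality.} To get $i_\r>0$, the cleanest approach is duality: $i_\r=1-I_{\r'}$ (equivalently $\underline i_\r=1-\underline I_{\r'}$), a standard identity for associate r.i.\ norms. By Theorem~A, $\r'=\rph'\ap\rps+\frac{\int|\cdot|}{[\int\phi]^p}$; under the growth hypothesis $\int_0^\i\phi(s)\max[1,s^{-p}]ds=\i$ one has $\int_{I_b}\phi=\i$, so the second term is dominated by $\rps$ on decreasing functions and $\r'\ap\rps$ for the purpose of computing $M_{\r'}$. Then $M_{\r'}(s)\ap\sup_t[\Psi(st)/\Psi(t)]^{1/p'}$ with $\Psi(t):=\int_0^t\psi+t^{p'}\int_t^\i\psi(y)y^{-p'}dy$, by Corollary~\ref{phi} (which even identifies $\Psi(t)^{1/p'}$ with $\rps(\chi_{(0,t)})$ and relates it to $\Phi(t)^{1-p'}$). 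Now the \emph{second} inequality in \eqref{CZ} says exactly $\Psi(ct)\le\tfrac12\Psi(t)$, so the argument of Step~1 applied to $\r'$ gives $I_{\r'}=\underline I_{\r'}<1$, hence $i_\r=1-I_{\r'}>0$.

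\textbf{Step 3: assemble.} With $0<i_\r\le I_\r<1$ in hand, Boyd's interpolation theorem (CZ operators are bounded on $L^2$ and are of weak type $(1,1)$ and weak type $(\i,\i)$ in the appropriate sense, so they are bounded on every r.i.\ space with nontrivial Boyd indices) gives \eqref{CZ1}. The role of the first hypothesis $\int_0^\i\phi(s)\min[1,s^{-p}]ds<\i$ is to guarantee $\phi$ is a \emph{non-trivial} weight in the sense required for Theorems~A, B and Corollary~\ref{phi} to apply (finiteness of the tail integral at $\i$ and $\int_{I_b}\phi(t)t^{-p}dt=\i$), and it also ensures $\rph$ is a genuine r.i.\ norm on $\Rn$ rather than collapsing.

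\textbf{Main obstacle.} The delicate point is not Boyd's theorem itself but the passage from the pointwise inequalities in \eqref{CZ}, which involve the specific split expression $\int_0^{ct}\phi+c^pt^p\int_t^\i\phi(y)y^{-p}dy$, to the clean statement $\Phi(ct)\le\tfrac12\Phi(t)$ controlling $M_\r$, and its dual counterpart for $\Psi$ — one must verify the two-sided comparison between $\int_0^{ct}\phi+c^pt^p\int_t^\i\phi y^{-p}$ and $\Phi(ct)=\int_0^{ct}\phi+(ct)^p\int_{ct}^\i\phi y^{-p}$ and check that the constant $c<1$ does not degrade the supremum over $t$ in $M_\r$. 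Equally, justifying $\r'\ap\rps$ \emph{uniformly in the dilation parameter} (so that $M_{\r'}$ really is computed from $\rps$) requires care with the additive $L^1$ term in Theorem~A; this is where the hypothesis $\int_0^\i\phi\,\max[1,s^{-p}]=\i$ is essential.
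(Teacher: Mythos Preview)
Your overall strategy --- reduce to $0<i_\r\le I_\r<1$ via Theorem~\ref{tB} and then invoke a Boyd-type boundedness criterion for CZ operators --- is essentially the paper's approach. The paper phrases the criterion (quoting \cite{K}) as $\lim_{s\to0^+}h_{\br}(s)=0=\lim_{s\to0^+}h_{\br'}(s)$, which is equivalent to nontrivial Boyd indices, and then reads off \eqref{CZ} directly from the formula for $h_\r$ in Theorem~B together with $\r'\approx\rps$.

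However, your execution swaps the roles of the two lines of \eqref{CZ}. In Step~1 the inequality $\Phi(ct)\le\tfrac12\Phi(t)$ (granting it follows from the first line of \eqref{CZ}) says $M_\r(c)\le 2^{-1/p}$ for a fixed $c\in(0,1)$. This is information about $M_\r$ at a point \emph{less than} $1$, hence it controls the \emph{lower} fundamental index: since $\log M_\r(c)<0$ and $\log c<0$ one gets
\[
\underline i_\r \;\ge\; \frac{\log M_\r(c)}{\log c}\;\ge\;\frac{\log 2}{p\log(1/c)}\;>\;0.
\]
It does \emph{not} bound $\underline I_\r=\inf_{s>1}\frac{\log M_\r(s)}{\log s}$ from above, because nothing has been said about $M_\r(s)$ for $s>1$; the asserted inequality $\underline I_\r\le\frac{\log 2^{-1/p}}{\log c}$ simply does not follow from $M_\r(c^n)\lesssim 2^{-n/p}$. (Nor is that ratio automatically $<1$: it equals $\tfrac{\log 2}{p\log(1/c)}$, which exceeds $1$ whenever $c>2^{-1/p}$.) Symmetrically, in Step~2 the $\psi$-condition yields $\underline i_{\r'}>0$, whence $\underline I_\r=1-\underline i_{\r'}<1$ --- not $i_\r>0$ as you wrote. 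After this relabelling the argument is correct and matches the paper's correspondence exactly: the $\phi$-line of \eqref{CZ} encodes $h_\r(c)<1$ (so $i_\r>0$), and the $\psi$-line encodes $h_{\r'}(c)<1$ (so $I_\r<1$).

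A smaller slip: your parenthetical comparison of the tail integrals is backwards --- since $ct<t$ one has $\int_{ct}^{\i}\phi(s)s^{-p}ds\ge\int_t^{\i}\phi(s)s^{-p}ds$, so the left side of \eqref{CZ} is \emph{below} $\Phi(ct)$, not above it. That is precisely the delicate point you flag as the ``main obstacle''; the paper sidesteps it by working directly with $h_\r(c)$ via Theorem~B rather than trying to first establish $\Phi(ct)\le\tfrac12\Phi(t)$.
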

\begin{proof}

Let $\r$ be an r.i. norm on  $\M_+(\R+)$ defined in terms of r.i. norm $\br$ on $\M_+(\R+)$ by $\r(f)=\br(f^*)$. It is shown in \cite{K} that 
 \[
T_K: L_\r(\Rn)\to L_\r(\Rn)
\]

provided 
\[\lim_{s\to0+}sh(s)=0=\lim_{s\to \i}h(s),
\]
where $h(s)=h_{\br}(\frac1s)$. In terms of $h_{\br}(s)$ and  $h_{\br^\prime}(s)=sh(s)$, these 
conditions read 
\begin{equation}\label{limh}
	\lim_{s\to0+}h_{\br}(s)=0=\lim_{s\to0+}h_{\br^\prime}(s).
\end{equation}
The inequalities
\[h_{\br}(s_1s_2)\le h_{\br}(s_1)h_{\br}(s_2) \quad \text{and} \quad h_{\br^\prime}(s_1s_2)\le h_{\br^\prime}(s_1)h_{\br^\prime}(s_2), \quad s_1,s_2\in \R+,
\]
imply that, given $\varepsilon>0$, \eqref{limh} is equivalent to the existence of $c$, $0<c<1$, for which $h_\r(c)<\varepsilon$ and $h_{\r^\prime}(c)<\varepsilon$.
By Theorem~B, then,  \eqref{limh} is equivalent to \eqref{CZ}, when $\br=\rph$.
\end{proof}

\begin{remark}
The condition \eqref{CZ} is also necessary for \eqref{CZ1}  
when, for example, $T_K$ is the Hilbert transform or one of the Riesz  
transforms.
\end{remark}

\

\section{Appendix}\label{s7}
It is our purpose here to give an heuristic argument to motivate the choice of $\psi$ in \eqref{3.1} when $\phi$ is a non-trivial weight on $I_b$ satisfying $\int_{I_b}\phi=\i$.

Now,
 $$\rphd(g)=\sup_{f\in \M_+(I_b)}
 \frac{\int_0^b f^*(t)g^*(t)dt}{\left[\int_0^bf^{**}(t)^p\phi(t)dt\right]^{\frac1p}}=:I(g), \quad g\in\M_+(I_b).
 $$
 It suffices to consider $f(t)=\int_t^b h(s)\frac{ds}{s}$ for some $h\in \M_+(I_b)$, $h\not = 0$  a.e..
 Since, in that case, 
  $$\int_0^b f^*(t)g^*(t)=\int_0^b\int_t^bh(s)\frac{ds}{s}g^*(t)dt=\int_0^b h(t)g^{**}(t)dt$$
  and
  $$f^{**}(t)=t^{-1}\int_0^t\int_s^bh(y)\frac{dy}{y}ds=t^{-1}\int_0^t h(s)ds+\int_t^bh(s)\frac{ds}{s}\ap (Sh)(t), \quad t\in I_b,$$
    we have 
  $$I(g)=\sup_{h\in \M_+(I_b)}
 \frac{\int_0^b h(t)g^{**}(t)dt}{\left[\int_0^b (Sh)(t)^p\phi(t)dt\right]^{\frac1p}}.
 $$
 
 If $\bp$ is such that 
  
\begin{equation} \label{M1}
\int_0^b (Sh)^p\phi\le C\int_0^b h^p\bp, \quad h\in \M(I_b),
\end{equation}
then, 
\[ I(g)\ge C^{-1}\sup_{h\in\M_+(I_b)}\frac{\int_0^bh(t)g^{**}(t)\,dt}{\left[ \int_0^bh(t)^p\bp(t)\,dt\right]^{\frac1p}}.
\]

This suggests we take $\psi(t)=\bp(t)^{1-p^\prime}$ where $\bp$ is, in some sense the smallest weight such that \eqref{M1} holds. Andersen's  condition \eqref{a3.1} for \eqref{M1}
leads us to solve for $ \bp(t)^{1-p^\prime}$ in the equation
\begin{equation} \label{M2}
\left[\int_0^b \frac{\phi(s)}{(s+t)^{p}}ds\right]^{\frac{1}{p}}
	\left[\int_0^b \left(\frac{t}{s+t}\right)^{p^\prime}\bp(s)^{1-p^\prime}ds\right]^{\frac{1}{p^\prime}}=1,
\end{equation}
or, what is equivalent,
\[\int_0^t \bp(s)^{1-p^\prime}ds+t^{p^\prime}\int_t^b \bp(s)^{1-p^\prime}s^{-p^\prime}ds
=\left[t^{-p}\int_0^t \phi(s)ds+ \int_t^b \phi(s)s^{-p}ds\right]^{1-p^\prime}.
\]
Differentiation with respect to $t$ yields 
\[t^{p^\prime-1}\int_t^b \bp(s)^{1-p^\prime}s^{-p^\prime}ds
= \int_0^t\phi(s)ds
\left[t^{-p}\int_0^t \phi(s)ds+ \int_t^b \phi(s)s^{-p}ds\right]^{-p^\prime}.
\]
Differentiating again with respect to $t$ we get

\[ \bp(t)^{1-p^\prime}=
\frac{pp^\prime t^{pp^\prime-1}\int_0^t \phi(s)ds \int_t^b \phi(s)s^{-p}ds}{\left[\int_0^t \phi(s)ds+t^{p} \int_t^b \phi(s)s^{-p}ds\right]^{p^\prime+1}}
-\frac{t^{p^\prime}\phi(t)}{\left[\int_0^t \phi(s)ds+t^{p} \int_t^b \phi(s)s^{-p}ds\right]^{p^\prime}}
\]
It seems we essentially have 
\begin{equation}\label{M3}
\bp(t)^{1-p^\prime}=\frac{(P\phi)(t)(Q_{p}\phi)(t)}{\left[(P\phi)(t)+(Q_{p}\phi)(t)\right] ^{p^\prime+1}}. 
\end{equation}

 The weight $\widehat{\phi}(t)$ given by 
\[\widehat{\phi}^{1-p^\prime}(t)= \frac{t^{p^\prime}\phi(t)}{\left[\int_0^t \phi(s)ds+t^{p} \int_t^b \phi(s)s^{-p}ds\right]^{p^\prime}}
= \frac{\phi(t)}{\left[(P\phi)(t)+(Q_{p}\phi)(t)\right]^{p^\prime}}
\] 
is readily shown to satisfy Andersen's condition \eqref{M2} and, hence, so will 
\[ \frac{(P\phi)(t)(Q_{p}\phi)(t)}{\left[(P\phi)(t)+(Q_{p}\phi)(t)\right] ^{p^\prime+1}}=
\bp(t)^{1-p^\prime}+\widehat{\phi}(t)^{1-p^\prime}.
\]
Now, $\bp(t)$ will be better then $\widehat{\phi}(t)$ in \eqref{M1} if 
\[\int_0^b g^{**}(t)^{p^\prime}\widehat{\phi}(t)^{1-p^\prime}dt \le C
 \int_0^b g^{**}(t)^{p^\prime} \bp(t)^{1-p^\prime}dt.
 \]
  One readily infers from Theorem~\ref{tI} that this will be so if and only if
  \[\int_0^t s^{p^\prime-1}\int_s^b \widehat{\phi}(y)^{1-p^\prime}y^{-p^\prime}dy ds\le C
 \int_0^t s^{p^\prime-1}\int_s^b  \bp(y)^{1-p^\prime}y^{-p^\prime}dyds.
 \]
 But,
  \begin{align*}
  \int_s^b  \bp(y)^{1-p^\prime}y^{-p^\prime}dy
  &\ap\int_s^b y^{-p^\prime}
  \frac{(P\phi)(y)(Q_{p}\phi)(y)}{\left[(P\phi)(y)+(Q_{p}\phi)(y)\right]^{p^\prime+1}}dy\\
  &=\int_s^b
  \frac{y^{p-1}\int_0^y \phi(z)dz \int_y^b \phi(z)z^{-p}dz}
  {\left[\int_0^y \phi(z)dz+y^{p} \int_y^b \phi(z)z^{-p}dz\right]^{p^\prime+1}}dy\\
  &=-\frac1{p^\prime}\int_s^b \int_0^y \phi(z)dz 
  \frac{d}{dz}\left[\int_0^y \phi(z)dz+y^{p} \int_y^b\phi(z) z^{-p}dz\right]^{-p^\prime}\,dy\\
  &=-\frac1{p^\prime} \int_0^y \phi(z)dz 
\left[\int_0^y \phi(z)dz+y^{p} \int_y^b\phi(z) z^{-p}dz\right]^{-p^\prime}\Bigg |_s^b\\
&\hskip+1cm +\frac1{p^\prime}
\int_s^b \phi(y)\left[\int_0^y \phi(z)dz+y^{p} \int_y^b \phi(z)z^{-p}dz\right]^{-p^\prime}dy\\
   &\ge \frac1{p^\prime}\int_s^b \widehat{\phi}(y)^{1-p^\prime}y^{-p^\prime}dy,
  \end{align*}
  if $\int_0^b\phi(z)dz=\i$.

\end{document}